\newtheorem{theorem}{Theorem}
\newtheorem{corollary}{Corollary}
\newtheorem{lemma}{Lemma}
\newtheorem{proposition}{Proposition}
\newtheorem{definition}{Definition}
\newtheorem{assumption}{Assumption}
\theoremstyle{remark}
\newtheorem{remark}{Remark}
\newtheorem{example}{Example}
\def\F{\mathcal{F}}
\def\G{\mathcal{G}}
\def\CE{\mathcal{E}}
\def\CM{\mathcal{M}}
\def\bR{\mathbb{R}}
\def\bN{\mathbb{N}}
\begin{document}

\title{Data-driven nonlinear expectations for statistical uncertainty in decisions}
\author{Samuel N. Cohen\\ Mathematical Institute, University of Oxford\\samuel.cohen@maths.ox.ac.uk}
\date{\today}

\maketitle
\begin{abstract}
In stochastic decision problems, one often wants to estimate the underlying probability measure statistically, and then to use this estimate as a basis for decisions. We shall consider how the uncertainty in this estimation can be explicitly and consistently incorporated in the valuation of decisions, using the theory of nonlinear expectations. 

Keywords: statistical uncertainty, robustness, nonlinear expectation. 

MSC 2010: 62F86, 62F25, 62A86, 91G70, 90B50
\end{abstract}

\begin{example}\label{example1}
 Consider the following problem. Let $\{X_n\}_{n\in \bN}$ be identical independent Bernoulli random variables, with unknown parameter $p = P(X_n=1)= 1-P(X_n=0)$, i.e. independent tosses of the same, possibly unfair, coin. You observe $\{X_n\}_{n=1}^{N}$, and then need to draw a conclusion about the likely behaviour of an iid trial $X$. 

In a classical frequentist framework, this is straightforward: the estimator of $p$ (either from MLE or moment matching) is given by $\hat p = S_N/N$, where $S_N = \sum_{n=1}^N X_N$; this estimate has sampling variance $p(1-p)/N\approx \hat p(1-\hat p)/N$.

Suppose we need to evaluate a wager on $X$. Given a loss function $\phi$, we would then usually calculate the expected loss $E[\phi(X)]$, where the expectation is based on the estimated parameters. Without loss of generality, we can assume $\phi(0)=0$, so the inferred expectation is simply given by 
\[\hat E[\phi(X)] = \hat p \phi(1).\]

This leads to a surprising conclusion: the precision of the estimate of $p$ has no impact on our assessment of the wager. To see this, consider a sample based on $N'\gg N$ observations, but with the same value of $\hat p$ . Then the precision of the estimate (as indicated by the reciprocal of the sampling variance) is much higher, but the expected loss of the wager remains identical. Consequently, when considering this wager, this approach concludes that you are indifferent between the settings when $p$ is known precisely or imprecisely. For example, suppose there were two coins, the first was thrown $3$ times with $2$ heads, the second $3000$ times with $2000$ heads. The estimated-expected-loss criterion then states that you are indifferent in choosing which coin to bet on, which is contrary to experience. Note that this conclusion is not changed by the presence of the loss function $\phi$.

Now, some may argue that this is a particular flaw in the frequentist point-estimate approach, as the error of the estimate of $p$ is not part of the probabilistic framework we use when calculating the expectation. So, let's take a Bayesian approach and put a prior on $p$, for example the (conjugate) Beta distribution $B(\alpha, \beta)$. The posterior distribution is then $B(\alpha+S_N, \beta+N-S_N)$; this has mean $\mu_p:=(\alpha+S_N)/(\beta+\alpha+N)$ and variance $\mu_p(1-\mu_p)/(\beta+\alpha+N+1)$.
The posterior expected loss is then $\mu_p \phi(1)$; again this does not depend on the precision of the estimate. 

The choice of prior used is immaterial, as the behaviour is determined by (writing $\F_N$ for the $\sigma$-algebra generated by our observations)
\[E[\phi(X)|\F_N] = E\big[E[\phi(X)|p,\F_N]\big|\F_N\big]=E[p\phi(1)|\F_N] = E[p|\F_N] \phi(1)\]
so only the posterior mean value of $p$ has any impact, not its posterior variance (or any other measure of uncertainty). Even if we extend beyond taking an expected payoff, for example to considering a posterior mean-variance criterion, we would find that the posterior variance of $\phi(X)$ is 
\[E\Big[\Big(\phi(X)-E[\phi(X)|\F_N]\Big)^2\Big|\F_N\Big]= E[p|\F_N](1-E[p|\F_N])\phi(1)^2\]
which still only depends on the posterior mean of $p$. The same conclusion will be reached for any criterion which depends only on the posterior law of $\phi(X)$.

From this, we can conclude both the frequentist and Bayesian expected loss approaches fail to incorporate uncertainty in $p$ in our decision making, in this simple setting\footnote{The mathematical reason for this is that a mixture of Bernoulli random variables is again a Bernoulli random variable. Therefore, at the level of the marginal distribution of $X$, every hierarchical model is equivalent to a non-hierarchical model, and a Bayesian approach adds little mathematically. In other words, as a Bernoulli distribution is a one-parameter family, the posterior distribution can only remember a single value -- the estimated probability -- so there is nowhere to `store' knowledge of the precision of the estimate. The simplicity of this setting may seem contrived, but demonstrates that one cannot, in general, claim that a Bayesian posterior expected loss approach is sufficient to deal with all forms of uncertainty.}.
\end{example}

The unusual behaviour of this type of example has been noticed before. For example, Keynes remarks (using the term `evidential weight' to indicate a concept similar to the precision of probabilities):
\begin{quotation}
 For in deciding on a course of action, it seems plausible to suppose that we ought to take account of the weight as well as the probability of different expectations. ---J.M. Keynes, A Treatise on Probability\footnote{This idea is discussed at length in Keynes' treatise, but is not pursued as a principle in statistics, as is shown by the next sentence: ``But it is difficult to think of any clear example of this, and I do not feel sure that the theory of `evidential weight' has much practical significance.'' In some sense, the aim of this paper is to address this lack of examples in a concrete mathematical fashion, and to propose practical solutions based on classical statistical methods.}, 1921 \cite[p.76]{Keynes1921}
\end{quotation}
Knight \cite{Knight1921} argues that ignoring this uncertainty is not descriptive of people's actions -- we do, generally, have a strict preference for knowledge of the probabilities of outcomes (see also the more general criticism of Allais \cite{Allais1953}). This leads him to distinguish between the concepts of `risk', which is associated with the outcome of $X$ given $p$, and `uncertainty'\footnote{This is a significant simplification of Knight's argument, which also looks at the question of estimating probabilities of future events, which by their very nature, are not the same as events which have already occurred. Nevertheless, the terminology of `Knightian uncertainty' has become common as referring to lack of knowledge of probabilities, so we retain this usage.}, which is associated with our lack of knowledge of $p$.

Within either of the two classical frameworks considered above, there is a natural and classical way to deal with this issue. For a frequentist, instead of using the point estimate $\hat p$, one could consider building a confidence interval for $p$, and then comparing wagers by their worst expectation among parameters within the confidence interval. As the sample size increases, the confidence interval shrinks, and so (for a fixed value of $\hat p$) the value of the wager increases. Similarly for a Bayesian, using a credible interval in the place of the confidence interval. While well known and sensible, this is (at least on the surface) an ad hoc fix, and needs to be defended philosophically: for example, in the Bayesian setting, the uncertainty in $p$ should already have been included in the assessment of $E[\phi(X)|\F_N]$, so this approach seems to be double-counting the uncertainty. In more complex settings, where the parameter $p$ is replaced by a multidimensional parameter and we are interested in comparing the values of a variety of random outcomes (whose expectations are generally nonlinear functions of the parameters), confidence sets become less natural, so a more general and rigorous approach seems to be needed. 

In this paper, we will give one such approach. As Example \ref{example1} shows, to fully incorporate our statistical uncertainty, we cannot simply estimate the (posterior) distribution of the outcome. Instead, we need to retain some knowledge of how accurate that estimate is, and feed that additional knowledge into our decision making.  We shall do this by making a general suggestion of a method, proving some of its general properties, and giving a selection of pertinent examples.

This is our key philosophical claim:
\begin{quotation}When evaluating outcomes in the presence of estimation and model uncertainty, it is not enough to depend simply on the  distribution of the outcome under a fitted model; the evaluation should also depend on \emph{how well other parameter choices and models would have fitted the observations} on which we are basing our evaluation. 
\end{quotation}

Instead of simply dealing with a single probability, we will study the effect of using the likelihood function (which indicates how well a model fits our observations) to generate a `convex expectation', closely related to the risk measures often studied in mathematical finance. The theory of these nonlinear expectations is explored in detail in F\"ollmer and Schied \cite{Follmer2002} (up to some changes of sign), and gives a mathematically rigorous way to deal with `Knightian uncertainty'. In economics, this is closely linked to Gilboa and Schmeidler's model of multiple priors \cite{Gilboa1989}. However, little work has been done on connecting nonlinear expectations with statistics. 

For Example \ref{example1} above, our proposal amounts to the following. Instead of working with the expected loss $E[\phi(X)]$ under one particular estimated measure, consider the quantity
\[\CE(\phi(X)) = \sup_{q\in[0,1]} \big\{q\phi(1) +(1-q)\phi(0) - (k^{-1}\alpha(q))^\gamma\big\}\]
for a fixed uncertainty aversion parameter $k>0$ and exponent $\gamma\geq 1$, where $\alpha$ is the negative log-likelihood of our observations, shifted to have minimal value zero, that is (for $\hat p=S_N/N$ as above),
\begin{equation}\label{eq:coinpenalty}
 \alpha(q) = N\bigg(\hat p \log\Big(\frac{\hat p}{q}\Big) + (1-\hat p) \log\Big(\frac{1-\hat p}{1-q}\Big)\bigg) \approx \frac{N}{\hat p(1-\hat p)}(q-\hat p)^2,
\end{equation}
where the approximation is for large $N$, in a sense to be explored later (it is essentially a form of the central limit theorem, see Section \ref{sec:parametric}). As $\alpha(\hat p)=0$, writing $\xi=\phi(X)$ we have $\CE(\xi) \geq E[\xi]$, and basic calculation gives a (rather inelegant) formula for $\CE(\xi)$ in terms of $\hat p$, $N$, $k$, $\phi(1)$ and $\phi(0)$. The operator $\CE$ gives an `upper' expectation for the loss, depending on the certainty of our parameter estimate given the sample. In effect, we are considering all possible values for $p$, and using our data to determine how reasonable we think they are (as indicated by $-(k^{-1}\alpha(p))^\gamma$). In effect, we do not attempt to give any point-estimate of $p$, or assume that we can treat $p$ as a random variable with known distribution. 

If we were to use $\CE$ to choose between a family of wagers $\phi_i$, we would obtain a classical minimax or `robust optimization' problem (see for example Ben-Tal, El Ghaoui and Nemirovski \cite{Ben-Tal2009}),
 \[\min_i\CE(\phi_i(X))=\min_i\sup_{q\in[0,1]} \big\{q\phi_i(1) +(1-q)\phi_i(0)- (k^{-1}\alpha(q))^\gamma\big\}.\]

The expectation $\CE$ can be thought of as an `upper' expectation, and is convex. The corresponding `lower' expectation $-\CE(-\xi)$ can also be defined, and is concave. This leads naturally to 
\[\Big[-\CE(-\xi), \CE(\xi)\Big]\]
 as an interval prediction for $\xi$. Comparing with more familiar quantities, such as (frequentist) confidence intervals, (Bayesian) credible intervals and upper and lower probabilities in Dempster--Schafer theory, we see that an interval estimate is a natural object to study when describing uncertainty in parameters. We shall see that confidence intervals (in particular, likelihood intervals) arise as a special case of our approach.

\begin{remark}
 The approach taken here is specifically tailored to consider `uncertainty' (lack of knowledge of probabilities), rather than `risk' (lack of knowledge of outcomes, but with known probabilities). In particular, if we have sufficient data that we know the probabiltiy measure exactly, then our expectation is simply the classical expected value, and does not involve any loss-aversion. A loss or utility function can be used to incorporate these effects, or our approach can be extended to allow a wider class of evaluations (see Remark \ref{rem:parametricextension})
\end{remark}

This article proceeds as follows: First, we give a summary of some of the basic properties of nonlinear expectations. Secondly, we consider the effect of using the log-likelihood as the basis for a penalty function and the corresponding ``divergence-robust nonlinear expectations'', and their connection to relative entropy. Using this, we tease out generic large-sample approximations, in both parametric and non-parametric settings. Finally, we consider the connection between divergence-robust expectations and robust statistics (in particular $M$-estimates).

\section{Nonlinear expectations}
 In this section we introduce the concepts of nonlinear expectations and convex risk measures, and discuss their connection with penalty functions on the space of measures. These objects provide a technical foundation with which to model the presence of uncertainty in a random setting. This theory is explored in some detail in F\"ollmer and Schied \cite{Follmer2002} and Frittelli and Rosazza-Gianin \cite{Frittelli2002}, among many others. We here present, without proof, the key details of this theory as needed for our analysis. 
 
 \begin{definition}\label{defn:nonlinearexpectation}
  Let $(\Omega, \F, P)$ be a probability space, and $L^\infty(\F)$ denote the space of $P$-essentially bounded $\F$-measurable random variables. A nonlinear expectation on $L^\infty(\F)$ is a mapping 
 \[\CE:L^\infty(\F) \to \bR\]
 satisfying the assumptions, 
 \begin{itemize}
  \item Strict Monotonicity: for any $\xi_1, \xi_2\in L^\infty(\F)$, if $\xi_1\geq \xi_2$ a.s. then $\CE(\xi_1) \geq \CE(\xi_2)$, and if in addition $\CE(\xi_1)=\CE(\xi_2)$ then $\xi_1=\xi_2$ a.s.
  \item Constant triviality: for any constant $k\in \bR$, $\CE(k)=k$.
  \item Translation equivariance: for any $k\in\bR$, $\xi\in L^\infty(\F)$, $\CE(\xi+k)= \CE(\xi)+k$.
 \end{itemize}
 A `convex' expectation in addition satisfies
 \begin{itemize}
  \item Convexity: for any $\lambda\in [0,1]$, $\xi_1, \xi_2\in L^\infty(\F)$, 
 \[\CE(\lambda \xi_1+ (1-\lambda) \xi_2) \leq \lambda \CE(\xi_1)+ (1-\lambda) \CE(\xi_2).\]
 \end{itemize}
 
 If $\CE$ is a convex expectation, then the operator defined by $\rho(\xi) = \CE(-\xi)$ is called a \emph{convex risk measure}. A particularly nice class of convex expectations is those which satisfy 
 \begin{itemize}
  \item Lower semicontinuity: For a sequence $\{\xi_n \}_{n\in\bN}\subset L^\infty(\F)$ with $\xi_n \uparrow \xi \in L^\infty(\F)$ pointwise, $\CE(\xi_n) \uparrow \CE(\xi)$.
 \end{itemize}
 \end{definition}
 
 The following theorem (which was expressed in the language of risk measures) is due to F\"ollmer and Scheid \cite{Follmer2002} and Frittelli and Rosazza-Gianin \cite{Frittelli2002}.
 \begin{theorem}\label{thm:penaltyexists}
  Let $\CM_1$ denote the space of all probability measures on $(\Omega, \F)$ absolutely continuous with respect to $P$. Suppose $\CE$ is a lower semicontinuous convex expectation. Then there exists a  `penalty' function $\alpha: \CM_1\to [0,\infty]$ such that 
 \[\CE(\xi) = \sup_{Q\in \CM_1} \big\{E_Q[\xi]-\alpha(Q)\big\}.\]
 In addition, there is a minimal such function, given by 
 \[\alpha_{\min}(Q) = \sup_{\xi\in L^\infty(\G)} \big\{E_Q[\xi]-\CE(\xi)\big\}\qquad \text{ for }Q\in \CM_1.\]
Provided $\alpha(Q)<\infty$ for some $Q$ equivalent to $P$, we can restrict our attention to measures in $\CM_1$ equivalent to $P$ without loss of generality.
 \end{theorem}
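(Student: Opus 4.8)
The plan is to prove the representation by convex duality, treating $\CE$ as a proper convex function on $L^\infty(\F)$ and identifying it with its biconjugate. I would work with the dual pairing $\langle L^\infty, L^1\rangle$, so that (via Radon--Nikodym) the relevant continuous linear functionals correspond to signed measures absolutely continuous with respect to $P$; for a density $Z = dQ/dP \in L^1$ I write $E_Q[\xi] = E[Z\xi]$. I then define the convex conjugate
\[
\CE^*(Q) = \sup_{\xi\in L^\infty(\F)} \big\{E_Q[\xi] - \CE(\xi)\big\}.
\]
Note that translation equivariance together with monotonicity forces $|\CE(\xi)-\CE(\eta)| \leq \|\xi-\eta\|_\infty$, so $\CE$ is automatically norm-continuous; the Fenchel--Moreau theorem then gives $\CE = \CE^{**}$, i.e. $\CE(\xi) = \sup_Q\{E_Q[\xi] - \CE^*(Q)\}$, \emph{provided} $\CE$ is lower semicontinuous for the weak* topology $\sigma(L^\infty, L^1)$. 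Setting $\alpha = \CE^*$ would then give the claimed formula.

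Next I would use the three defining properties to cut the supremum down to $\CM_1$. Monotonicity forces any $Q$ with $\alpha(Q)<\infty$ to have a nonnegative density: if $Z<0$ on a set $A$ of positive measure, testing $\CE^*$ against $\xi = -\lambda\mathbf{1}_A$ and letting $\lambda\to\infty$ drives $\CE^*(Q)$ to $+\infty$, since $E_Q[\xi] = \lambda E[-Z\mathbf{1}_A]\to\infty$ while $\CE(\xi)$ stays bounded below by $-\lambda$ fails to compensate. Constant triviality and translation equivariance then pin the mass: testing against constants $\xi\equiv c$ gives $c\big(E_Q[1]-1\big)$, which is bounded over $c\in\bR$ only if $E_Q[1]=1$. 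Hence every $Q$ contributing to the supremum is a genuine probability measure with $Q\ll P$, i.e. $Q\in\CM_1$.

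For minimality, any representing $\alpha$ satisfies $\CE(\xi) \geq E_Q[\xi] - \alpha(Q)$, hence $\alpha(Q) \geq E_Q[\xi] - \CE(\xi)$ for every $\xi$; taking the supremum over $\xi$ shows $\alpha(Q) \geq \CE^*(Q) =: \alpha_{\min}(Q)$, so the conjugate is the pointwise-smallest penalty. For the reduction to equivalent measures, suppose $\alpha(Q_0)<\infty$ for some $Q_0\sim P$. Given $Q\in\CM_1$ with $\alpha(Q)<\infty$ and $\epsilon\in(0,1)$, the mixture $Q_\epsilon = (1-\epsilon)Q + \epsilon Q_0$ is equivalent to $P$, and convexity of $\alpha_{\min}$ gives
\[
E_{Q_\epsilon}[\xi] - \alpha(Q_\epsilon) \geq (1-\epsilon)\big(E_Q[\xi]-\alpha(Q)\big) + \epsilon\big(E_{Q_0}[\xi]-\alpha(Q_0)\big).
\]
Letting $\epsilon\downarrow 0$ (the finite $Q_0$ term vanishes) recovers $E_Q[\xi]-\alpha(Q)$, so restricting the supremum to measures equivalent to $P$ leaves its value unchanged.

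The main obstacle is the hypothesis needed for the Fenchel--Moreau step: I must upgrade the stated continuity from below ($\xi_n\uparrow\xi \Rightarrow \CE(\xi_n)\uparrow\CE(\xi)$) to genuine $\sigma(L^\infty,L^1)$-lower semicontinuity. Without this, the biconjugate would be taken over the larger dual $ba(\Omega,\F)$ of finitely additive set functions, and the representing ``measures'' need not be countably additive, so the supremum would fail to range over $\CM_1$. The key lemma is therefore that continuity from below implies the Fatou property and hence weak* closedness of the sublevel sets $\{\CE \leq c\}$; by the Krein--Smulian theorem it suffices to verify this on $\|\cdot\|_\infty$-balls, where dominated convergence lets one pass from monotone a.s. convergence to the required weak* semicontinuity. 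This is precisely the delicate point carried out in F\"ollmer--Schied and Frittelli--Rosazza-Gianin, and it is what guarantees the countable additivity (hence absolute continuity) of the penalizing measures.
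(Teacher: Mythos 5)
The paper never proves this theorem: it is quoted ``without proof'' from F\"ollmer--Schied and Frittelli--Rosazza-Gianin, so your attempt can only be compared with the standard literature proof --- and your proposal \emph{is} that proof, in structure and in detail: Fenchel--Moreau biconjugation for the pairing $(L^\infty, L^1)$, the axioms forcing the conjugate to be $+\infty$ off $\CM_1$ (monotonicity gives nonnegative densities, translation equivariance/constant triviality pins total mass $1$), conjugacy giving minimality of $\alpha_{\min}$, the mixture argument for the restriction to measures equivalent to $P$ (valid because $\alpha_{\min}$, as a supremum of affine functionals, is convex), and the identification of the one genuinely delicate point, namely upgrading continuity from below to $\sigma(L^\infty,L^1)$-lower semicontinuity via the Fatou property and Krein--Smulian --- which you, like the paper, ultimately defer to the cited references. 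Two small repairs: (a) in the nonnegativity step, the bound you need is the \emph{upper} bound $\CE(-\lambda\bone_A)\leq \CE(0)=0$ (monotonicity plus constant triviality), not the lower bound ``$\CE(\xi)\geq -\lambda$'' you invoke; with that correction $\CE^*(Q)\geq \lambda E[(-Z)\bone_A]\to\infty$ as claimed; (b) the Krein--Smulian step needs slightly more than dominated convergence --- one shows each set $\{\CE\leq c\}\cap\{\|\xi\|_\infty\leq r\}$ is closed under bounded a.s.\ convergence (which follows from continuity from below by considering $\eta_m=\inf_{n\geq m}\xi_n\uparrow\xi$), hence $L^1$-closed, hence, being convex and bounded, $\sigma(L^\infty,L^1)$-closed by Mazur's theorem applied in $L^1$. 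Neither point changes the architecture of your argument, which is sound and matches the proof the paper relies on.
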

 
 Classic convex analysis shows that $\alpha_{\min}$ is the Fenchel--Legendre conjugate of $\CE$, and is also a convex function and weak-* lower semicontinuous. It is also clear that, as $\CE(0)=0$,  we have the identity $\inf_{Q\in \CM_1}\{\alpha_{\min}(Q)\} =0$.
 
 \begin{remark}
  As discussed above in the context of our example, this result gives some intuition as to how a convex expectation can model `Knightian' uncertainty. One considers all the possible probability measures on the space, and then selects the maximal expectation among all measures, penalizing each measure depending on how plausible it is considered. As convexity of $\CE$ is a natural requirement of an `uncertainty averse' assessment of outcomes, Theorem \ref{thm:penaltyexists} shows that this is the only way to construct an `expectation' $\CE$ which penalizes uncertainty, while preserving monotonicity, translation equivariance and constant triviality.

 In particular, if (and only if) $\CE$ is positively homogenous, that is, it satisfies ``for any $\lambda\geq 0$, $\CE(\lambda \xi) = \lambda \CE(\xi)$'',  then $\alpha_{\min}$ only takes the values $\{0, \infty\}$, and we can rewrite our representation as
 \[\CE(\xi) = \sup_{Q\in \CM^*} \big\{E_Q[\xi]\big\}.\]
 where $\CM^*\subset \CM_1$ is the set of measures for which $\alpha(Q)= 0$. In this case, we see that our convex expectation corresponds to taking the maximum of the expectations under a range of possible models for the random system.
 \end{remark}

\begin{remark}
 The convex expectation $\CE$ is defined above as an operator on $L^\infty$. However, given the equivalent representation 
 \[\CE(\xi) = \sup_{Q\in \CM_1: \alpha(Q)<\infty} \big\{E_Q[\xi] -\alpha(Q)\big\},\]
we can clearly define $\CE(\xi)$ for a wider class of random variables. In particular, $\CE(\xi)$ is well defined (but may be infinite) for all random variables $\xi$ such that $E_Q[\xi]>-\infty$  for every $Q\in \CM_1$ with $\alpha(Q)<\infty$.
\end{remark}

Given a convex nonlinear expectation $\CE$, there is a natural class of `acceptable' random variables for a decision problem, namely (given we evaluate losses) the convex level set 
\[\mathcal{A}=\{\xi: \CE(\xi)\leq 0\}.\]
One can also use a nonlinear expectation as a value to be optimized; in this setting the convexity of the operator is of significant interest. Finally, one can use a nonlinear expectation to give a robust point estimate of $\xi$, given a loss function $\phi$, by choosing the value $\hat\xi\in\bR$ which minimizes the loss $\CE(\phi(\xi-\hat\xi))$ (cf. Wald \cite{Wald1945}).

\section{Penalties and likelihood}
The general framework of nonlinear expectations is well suited to modelling Knightian uncertainty, but is not usually connected with statistical estimation. We would like to have a general principle for treating our uncertainty, which is closely tied to classical statistics. Our aim is to have a nonlinear expectation \emph{which uses observations to derive estimates of real-world probabilities, and uses these estimates and their uncertainty to give robust average values for a wide range of random outcomes}. Rather than continuing to take an abstract axiomatic approach, we shall consider the following concrete proposal:

\begin{definition}
Suppose we have an observation vector $\mathbf{x}$ taking values in $\bR^N$. For a model $Q\in \CM_1$, let $L(Q|\mathbf{x})$ denote the likelihood of $\mathbf{x}$ under $Q$, that is the density of $\mathbf{x}$ with respect to a reference measure (which we shall take to be Lebesgue measure on $\bR^N$ for simplicity).

Let $\mathcal{Q}\subseteq\CM_1$ be a set of models under consideration (for example, a parametric set of distributions).  We then define the ``$\mathcal{Q}|\mathbf{x}$-divergence'' to be the negative log-likelihood ratio
\[\alpha_{\mathcal{Q}|\mathbf{x}}(Q):= -\log\big(L(Q|\mathbf{x})\big) + \sup_{\tilde Q\in \mathcal{Q}}\Big\{\log\big(L(\tilde Q|\mathbf{x})\big)\Big\}.\]
The right hand side is well defined whether or not a maximum likelihood estimator\footnote{Recall that a $\mathcal{Q}$-MLE (maximum likelihood estimator) is a measurable map $\mathbf{x}\to \hat Q\in \mathcal{Q}$ such that $L(\hat Q|\mathbf{x}) \geq L(Q|\mathbf{x})$ for all $Q\in\mathcal{Q}$. We say that a quantity $Y$ is a $\mathcal{Q}$-MLE for $E_Q[\xi]$ if $Y=E_{\hat Q}[\xi]$ where $\hat Q$ is a $\mathcal{Q}$-MLE.} exists. Given a $\mathcal{Q}$-MLE $\hat Q$, we would have the simpler representation 
\[\alpha_{\mathcal{Q}|\mathbf{x}}(Q):= -\log\Big(\frac{L(Q|\mathbf{x})}{L(\hat Q|\mathbf{x})}\Big).\]

Given $\alpha_{\mathcal{Q}|\mathbf{x}}$, for an uncertainty aversion parameter $k>0$ and exponent $\gamma\in[1,\infty]$, we obtain the corresponding convex expectation
\[\mathcal{E}_{\mathcal{Q}|\mathbf{x}}^{k,\gamma}(\xi):= \sup_{Q\in \mathcal{Q}}\Big\{E_Q[\xi|\mathbf{x}] -\Big(\frac{1}{k}\alpha_{\mathcal{Q}|\mathbf{x}}(Q)\Big)^\gamma\Big\}\]
where we adopt the convention $x^\infty = 0$ for $x\in[0,1]$ and $+\infty$ otherwise. We call $\mathcal{E}_{\mathcal{Q}|\mathbf{x}}^{k,\gamma}$ the  ``$\mathcal{Q}|\mathbf{x}$-divergence robust expectation" (with parameter $k,\gamma$), or simply the ``DR-expectation\footnote{This acronym could also stand for `Data-driven Robust expectation', which may be a preferable emphasis.}''. 
\end{definition}

Our attention will mainly be on the two extremal cases $\gamma=1$ and $\gamma=\infty$, however the intervening cases are natural interpolations between them. The statement $1^\infty=0$ is natural from a convex analytic perspective, as it implies $|x|^q$ is proportional to the convex dual of $|x|^p$, whenever $p^{-1}+q^{-1}=1$, for $p\in[1,\infty]$.

We shall focus our attention on the special case where $\mathbf{x}=\{X_n\}_{n=1}^N$ and, under each $Q\in\mathcal{Q}$, we know $X,\{X_n\}_{n\in\bN}$ are iid random variables -- this allows analytically tractable results, however our approach is applicable much more widely.

\begin{remark}
In our example above, $\mathcal{Q}$ corresponds to the set of measures such that $X,\{X_n\}_{n=1}^{N}$ are iid Bernoulli with parameter $p\in[0,1]$. In this example, we did not consider all measures in $\CM_1$ (this would include, for example, models where $\{X_n\}_{n=1}^N$ and $X$ come from completely unrelated distributions), but neither did we restrict our attention to a single $Q\in\mathcal{Q}$.
\end{remark}

Typically the operator $\CE$ cannot be evaluated by hand, instead numerical optimization or approximation is needed. In the setting of Example \ref{example1} above, if $\gamma=1$ then a closed form representation can be obtained, however is quite inelegant (the optimal $q$ is the solution to a quadratic equation, but the resulting equation for $\CE_{\mathcal{Q}|\mathbf{x}}^{k,\gamma}(\xi)$ does not simplify). A simple example where closed form quantities can be derived is the classic setting where the data are assumed to be Gaussian with unknown mean (and as alluded to above, these are unsurprisingly very similar for large $N$).
\begin{example}\label{exampleNormal}
Suppose $\mathbf{x}=(X_1, X_2,...,X_N)$ and $\mathcal{Q}$ corresponds to those measures under which $X,\{X_n\}_{n=1}^N$ are iid $N(\mu, 1)$ random variables, where $\mu$ is unknown. Then, if $\bar X=N^{-1}\sum_{n=1}^N X_n$ denotes the sample mean, for any constant $\beta>0$, simple calculus can be used to derive
\[\begin{split}
   \CE_{\mathcal{Q}|\mathbf{x}}^{k,\gamma}(\beta X) &= \sup_{\mu\in \bR} \Big\{\beta\mu - \Big(\frac{1}{2k}\Big(\sum_{n=1}^N (\mu-X_n)^2-\sum_{n=1}^N(\bar X-X_n)^2\Big)\Big)^\gamma\Big\}\\
&= \sup_{\mu\in \bR} \Big\{\beta\mu - \Big(\frac{N}{2k}(\mu-\bar X )^2\Big)^\gamma\Big\}\\
&= \beta \bar X + \beta^{\frac{2\gamma}{2\gamma-1}}\Big(\frac{k}{N}\Big)^{\frac{\gamma}{2\gamma-1}}(2\gamma)^{\frac{-1}{2\gamma-1}}\Big(1-\frac{1}{2\gamma}\Big).
  \end{split}
\]
In particular, when $\gamma=1$, we have
\[ \CE_{\mathcal{Q}|\mathbf{x}}^{k,1}(\beta X) = \beta\bar X +\frac{\beta^2 k}{2N} = \beta \bar X + \frac{k}{2}\, \mathrm{Var}(\beta \bar X)\]
and, taking the limit $\gamma\to\infty$ (or directly from the definition),
\[ \CE_{\mathcal{Q}|\mathbf{x}}^{k,\infty}(\beta X) = \beta\bar X + \beta \sqrt{\frac{2k}{N}}=\beta\bar X +  \sqrt{2k}\,\mathrm{Sd}(\beta\bar X).\]
In this latter case, taking $k\approx 2$, we obtain the upper bound of the classical $95\%$ confidence interval for $\beta X$.
 
The corresponding lower expectations are given by the symmetric quantities
\[-\CE_{\mathcal{Q}|\mathbf{x}}^{k,1}(-\beta X)=\beta\bar X -\frac{\beta^2 k}{2N}, \qquad -\CE_{\mathcal{Q}|\mathbf{x}}^{k,\infty}(-\beta X) = \beta\bar X - \beta \sqrt{\frac{2k}{N}}.\] From this example we can observe a few phenomena, which we will discuss more generally below. First, for $\gamma<\infty$, $\CE_{\mathcal{Q}|\mathbf{x}}^{k,\gamma}(\beta X)$ is not positively homogenous in $\beta$, that is, $\CE_{\mathcal{Q}|\mathbf{x}}^{k,\gamma}(|\beta| X)\neq |\beta|\CE_{\mathcal{Q}|\mathbf{x}}^{k,\gamma}(X)$. The larger (in absolute terms) the random variable considered, the more the uncertainty affects our assessment. On the other hand, this requirement is satisfied when $\gamma=\infty$, and there is a close relationship between $\CE^{k,\infty}_{\mathcal{Q}|\mathbf{x}}$ and the classical confidence interval for $E[X]$.

Secondly, for any $\gamma$, as the ratio of the uncertainty parameter and the sample size $k/N \to 0$, the DR-expectation converges to the (unique)  $\mathcal{Q}$-MLE $\beta \bar X$ (that is, the parameter corresponding to the measure in $\mathcal{Q}$ with the largest likelihood). This convergence is of the order $(k/N)^{\frac{\gamma}{2\gamma-1}}$.

In this setting we can also calculate, for $\beta>0$,
\[\begin{split} \CE_{\mathcal{Q}|\mathbf{x}}^{k,1}(\beta X^2) &= \sup_{\mu\in \bR} \Big\{\beta(1+\mu^2) - \frac{N}{2k}(\mu-\bar X )^2\Big\}\\
&=\begin{cases}
\beta + \beta \Big(\frac{N(N-2k\beta^2)}{(N-2k\beta)^2}\Big)\bar X^2 & \beta< N/2k\\
+\infty & \beta \geq N/2k
\end{cases}
     \end{split}
\]
whereas 
\[\CE_{\mathcal{Q}|\mathbf{x}}^{k,\infty}(\beta X^2) = \beta\Big(1+\bar X + \sqrt{{2k}/{N}}\Big)^2,\] 
which is always finite. This explosion in $\CE^{k,1}_{\mathcal{Q}|\mathbf{x}}$ will be considered in more detail in Section \ref{sec:robust}. Notice that again, as $k/N\to 0$, \[\CE_{\mathcal{Q}|\mathbf{x}}^{k, \gamma}(\beta X^2)\to \beta(1+\bar X^2),\] which is the  $\mathcal{Q}$-MLE for $E[\beta X^2]$.
\end{example}

\begin{remark}
 We will focus on the use of the likelihood for estimation, however it is clear that other quantities could also be considered. In particular, if we have a family of parametric distributions $\mathcal{Q}$ with varying numbers of parameters, then it would be reasonable to penalize by Akaike's information criterion or the Bayesian information criterion, rather than simply by the likelihood. In some settings, the use of a quasi-likelihood, rather than the true likelihood, may also be of interest, particularly if this renders the problem more computationally efficient. Furthermore, including terms relating to the log-density of a `prior' penalty may be of interest, so the penalty will be taken using the log-density of the posterior distribution, rather than with the likelihood (see Example \ref{meanvariancexample} for this). For the sake of simplicity, we will not pursue these variants in detail here, however their behaviour should be qualitatively similar to what we consider. 
\end{remark}

We have noticed above, in the Gaussian case, that our nonlinear expectation is positively homogeneous only in the case $\gamma=\infty$. This is a general fact, as shown by the following proposition.
\begin{proposition}
In the case $\gamma=\infty$ (and only in this case, provided the likelihood is finite and varying for a nontrivial subset of $\mathcal{Q}$), our nonlinear expectation is positively homogeneous, that is 
\[\CE_{\mathcal{Q}|\mathbf{x}}^{k,\gamma}(\beta \xi) = \beta\CE_{\mathcal{Q}|\mathbf{x}}^{k,\gamma}(\xi) \text{ for all }\beta>0\text{ iff }\gamma=\infty.\]
\end{proposition}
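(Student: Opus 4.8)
The plan is to prove the two implications separately: the direction $\gamma=\infty\Rightarrow$ positive homogeneity is routine, while the converse carries all the content. Throughout I abbreviate $\alpha=\alpha_{\mathcal{Q}|\mathbf{x}}$.

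First the easy direction. With the stated convention $x^\infty=0$ for $x\in[0,1]$ and $+\infty$ otherwise, the penalty $(k^{-1}\alpha(Q))^\infty$ collapses to the $\{0,\infty\}$-valued indicator of the \emph{fixed} set $\mathcal{Q}_k:=\{Q\in\mathcal{Q}:\alpha(Q)\le k\}$, so that
\[\CE_{\mathcal{Q}|\mathbf{x}}^{k,\infty}(\xi)=\sup_{Q\in\mathcal{Q}_k}E_Q[\xi|\mathbf{x}].\]
Since $\mathcal{Q}_k$ does not depend on $\xi$, and $E_Q[\cdot|\mathbf{x}]$ is linear while $\sup(\beta\,\cdot)=\beta\sup(\cdot)$ for $\beta>0$, positive homogeneity is immediate. (This is exactly the instance of the earlier remark that positive homogeneity corresponds to a minimal penalty valued in $\{0,\infty\}$.)

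For the converse, write $g(a)=(a/k)^\gamma$ and record the decisive feature of $\gamma<\infty$: $g(a)>0$ for every $a>0$, whereas at $\gamma=\infty$ the penalty vanishes on all of $\{\alpha\le k\}$. Fix $\xi$ and set $h(\beta):=\CE^{k,\gamma}_{\mathcal{Q}|\mathbf{x}}(\beta\xi)=\sup_{Q\in\mathcal{Q}}\{\beta E_Q[\xi|\mathbf{x}]-g(\alpha(Q))\}$; as a supremum of affine functions of $\beta$ this is convex, with $h(0)=0$, and positive homogeneity (for this $\xi$) is equivalent to $h$ being linear through the origin on $(0,\infty)$. I would then pin down the two ``end slopes'' of $h$. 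Because $g\ge 0$, one has $h(\beta)\le\beta m_\infty$ for all $\beta$, where $m_\infty:=\sup_{Q:\,\alpha(Q)<\infty}E_Q[\xi|\mathbf{x}]$, and fixing any finite-penalty $Q$ and letting $\beta\to\infty$ gives $h(\beta)/\beta\to m_\infty$; feasibility of the $\alpha=0$ models gives $h(\beta)/\beta\ge m_0:=\sup_{Q:\,\alpha(Q)=0}E_Q[\xi|\mathbf{x}]$, and the goal is the matching bound $h(\beta)/\beta\to m_0$ as $\beta\to0^+$. If $\CE^{k,\gamma}_{\mathcal{Q}|\mathbf{x}}$ were positively homogeneous, $h(\beta)/\beta$ would be constant, forcing $m_0=m_\infty$.

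The contradiction then comes from nontriviality: the hypothesis supplies a model $Q_1\in\mathcal{Q}$ with $0<\alpha(Q_1)<\infty$ whose law differs from every maximum-likelihood model, and since distinct probability laws are separated by a bounded measurable function I can choose $\xi$ with $E_{Q_1}[\xi|\mathbf{x}]>m_0$, whence $m_\infty\ge E_{Q_1}[\xi|\mathbf{x}]>m_0$, contradicting $m_0=m_\infty$. The step I expect to be the main obstacle is the upper bound $\limsup_{\beta\to0^+}h(\beta)/\beta\le m_0$: this is a coercivity statement, namely that models whose expectation of $\xi$ stays a fixed amount above the MLE value must carry penalty bounded away from zero, i.e. $\inf\{\alpha(Q):E_Q[\xi|\mathbf{x}]\ge m_0+\epsilon\}>0$ for each $\epsilon>0$. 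Granting this, $g$ of that infimum is strictly positive precisely because $\gamma<\infty$, so such models are suppressed as $\beta\to0$ and the limit $m_0$ follows. This coercivity is what the hypothesis that ``the likelihood is finite and varying for a nontrivial subset of $\mathcal{Q}$'' should provide, through continuity and local compactness of the parametrization; it is also exactly the step that breaks at $\gamma=\infty$, where $g$ vanishes near the MLE, consistent with the equivalence being proved.
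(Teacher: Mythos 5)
Your first direction ($\gamma=\infty$ implies positive homogeneity) is correct, and is essentially the paper's own observation: the penalty collapses to the $\{0,\infty\}$-valued indicator of the fixed set $\{Q:\alpha(Q)\le k\}$, and a supremum of linear functionals over a fixed set is positively homogeneous. The converse, however, is incomplete at exactly the point you flag as the ``main obstacle'', and that obstacle cannot be removed using the stated hypotheses. Your argument needs $\lim_{\beta\to0^+}h(\beta)/\beta=m_0$, equivalently the coercivity statement $\inf\{\alpha(Q):E_Q[\xi|\mathbf{x}]\ge m_0+\epsilon\}>0$. But $\mathcal{Q}$ is an abstract family of measures: the hypothesis ``the likelihood is finite and varying for a nontrivial subset of $\mathcal{Q}$'' supplies no parametrization, no continuity and no compactness, so there is nothing from which to derive coercivity. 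What convexity actually gives is $\lim_{\beta\to 0^+}h(\beta)/\beta=\lim_{\delta\to 0^+}\sup_{\{Q:\alpha(Q)\le\delta\}}E_Q[\xi|\mathbf{x}]$, and if penalty values accumulate at $0$ along measures whose laws stay away from the maximum-likelihood law, this limit can strictly exceed $m_0$. Positive homogeneity then only forces this larger limit to equal $m_\infty$, so your contradiction requires a $\xi$ separating $Q_1$ from $\bigcap_{\delta>0}\overline{\mathrm{co}}\{Q:\alpha(Q)\le\delta\}$, not merely from the $\alpha=0$ models. A second, smaller gap sits in the separation step itself: you need $E_{Q_1}[\xi|\mathbf{x}]>m_0$ where $m_0$ is a supremum over \emph{all} maximum-likelihood models, and separating one law from a whole set of laws by a single bounded function is a Hahn--Banach statement requiring convexity and closedness of that set, not just pairwise distinctness of laws.

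For comparison, the paper does not argue through scaling limits at all: it invokes the classical F\"ollmer--Schied characterization that a convex expectation is positively homogeneous if and only if its penalty takes only the values $\{0,\infty\}$, which holds for $\gamma=\infty$ by the convention on $x^\infty$ and fails for $\gamma<\infty$ under the stated nontriviality hypothesis. (Strictly, that characterization concerns the \emph{minimal} penalty, which the paper itself later remarks need not coincide with $(k^{-1}\alpha_{\mathcal{Q}|\mathbf{x}})^\gamma$, so even the cited route quietly assumes the finitely-penalized measures genuinely influence the supremum.) If you want to salvage your approach, you would need either an additional ``gap'' assumption such as $\inf\{\alpha(Q):\alpha(Q)>0\}>0$, or a strengthened separation argument applying to the closed convex hulls of the small-penalty sets; as it stands, the decisive step is asserted rather than proved.
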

\begin{proof}
It is classical (see for example F\"ollmer and Schied \cite{Follmer2002}) that a convex nonlinear expectation is positively homogeneous if and only if the penalty takes only the values $\{0,\infty\}$. Given the likelihood is finite and varying on a nontrivial subset of $\mathcal{Q}$, this is not the case for any $\gamma<\infty$, but is the case for $\gamma=\infty$ by definition.
\end{proof}

\begin{remark}
For a $\mathcal{Q}$-MLE $\hat Q$, by definition we have  $L(Q|\mathbf{x})\leq L(\hat Q|\mathbf{x})$, so $\alpha_{\mathcal{Q}|\mathbf{x}}(Q)\geq 0$, with equality only if $Q$ is a maximum likelihood model. In general we cannot say whether $\alpha_{\mathcal{Q}|\mathbf{x}}$ is convex (indeed, we have not assumed that its domain $\mathcal{Q}$ is a convex set), so it is not generally the case that $(k^{-1}\alpha_{\mathcal{Q}|\mathbf{x}})^\gamma$ is the minimal penalty for $\CE_{\mathcal{Q}|\mathbf{x}}^{k, \gamma}$.
\end{remark}

\subsection{Dynamic consistency}
Within the theory of nonlinear expectations, much attention has been paid to questions of dynamic consistency. If we have a family $\{\CE_s\}_{s\ge 0}$ of `conditional' nonlinear expectations relative to a filtration $\{\F_t\}_{t\ge 0}$, then dynamic consistency requires, for every $\xi$ and all $s\leq t$, that we have (i) the recursivity relationship $\CE_s(\CE_t(\xi))=\CE_s(\xi)$ and (ii) the relevance condition $\CE_s(I_A \xi) = I_A \CE_s(\xi)$ for all $A\in \F_t$.  This concept is generally not appropriate for our approach, as the expectations we define are typically not consistent. This can be seen from the following easy extension of Example \ref{exampleNormal}.

\begin{example}
 In the context of Example \ref{exampleNormal}, write $\mathbf{x}_N = \{X_1,...,X_N\}$, so $\F_N = \sigma(\mathbf{x}_N)$. We have
\[\begin{split}\CE_{\mathcal{Q}|\mathbf{x}_1}^{k,1}(\CE_{\mathcal{Q}|\mathbf{x}_2}^{k,1}(X)) &= \CE_{\mathcal{Q}|\mathbf{x}_1}^{k,1}\Big(\frac{X_1+X_2}{2} + \frac{k}{4}\Big)= \frac{X_1}{2} + \frac{k}{4} + \CE_{\mathcal{Q}|\mathbf{x}_1}^{k,1}\Big(\frac{X_2}{2}\Big)\\
   &= \frac{X_1}{2} + \frac{k}{4} + \frac{X_1}{2} + \frac{k}{8}= X_1 + \frac{3k}{8}\\
&\neq X_1 + \frac{k}{2} = \CE_{\mathcal{Q}|\mathbf{x}_1}^{k,1}(X)
  \end{split}
\]
and
\[\begin{split}\CE_{\mathcal{Q}|\mathbf{x}_1}^{k,\infty}(\CE_{\mathcal{Q}|\mathbf{x}_2}^{k,\infty}(X)) &= \CE_{\mathcal{Q}|\mathbf{x}_1}^{k,\infty}\Big(\frac{X_1+X_2}{2} + \sqrt{\frac{2k}{2}}\Big)= \frac{X_1}{2} + \sqrt{k} + \CE_{\mathcal{Q}|\mathbf{x}_1}^{k,\infty}\Big(\frac{X_2}{2}\Big)\\
   &= \frac{X_1}{2} + \sqrt{k} + \frac{X_1}{2} + \sqrt{\frac{k}{2}}= X_1 + (1+2^{-1/2})\sqrt{k}\\
&\neq X_1 + \sqrt{2k} = \CE_{\mathcal{Q}|\mathbf{x}_1}^{k,\infty}(X).
  \end{split}
\]
So in either case, the nonlinear expectation $\{\CE_{\mathcal{Q}|\mathbf{x}_N}^{k,\gamma}\}_{N\in\bN}$ is not recursive\footnote{It is curious that, in this setting, recursion can make the nonlinear expectation \emph{less} risk averse, that is, $\CE_{\mathcal{Q}|\mathbf{x}_1}^{k,1}(\CE_{\mathcal{Q}|\mathbf{x}_2}^{k,1}(X))<\CE_{\mathcal{Q}|\mathbf{x}_1}^{k,1}(X)$. This is contrary to the typical behaviour of nonlinear expectations (see, for example, the discussion of scaling in Madan, Pistorius and Stadje \cite{Madan2016}). For $\gamma=\infty$, we have the usual relationship $\CE_{\mathcal{Q}|\mathbf{x}_1}^{k,1}(\CE_{\mathcal{Q}|\mathbf{x}_2}^{k,1}(X))>\CE_{\mathcal{Q}|\mathbf{x}_1}^{k,1}(X)$, but for other values of $\gamma$ (e.g. $\gamma=2$), one can observe a non-monotone relationship between the number of recursion steps and the level of risk aversion.}.
\end{example}

In effect, our problem differs from the dynamically consistent one in the following (closely related) key ways:
\begin{itemize}
\item In a dynamically consistent setting, the penalty is prescribed (and may be taken to be constant through time, as discussed in \cite{Cohen2015b}) while the observations lead to conditional expectations appearing in the nonlinear expectation. In our setting, the penalty is \emph{determined by the observations} (through the $\mathcal{Q}|\mathbf{x}$-divergence), and so only the family of models $\mathcal{Q}$ and the constants $k,\gamma$ need to be specified. In this way, the observations will inform our understanding of the real-world probabilities directly, rather than simply replacing them with conditional probabilties.
\item In a dynamically consistent setting, decisions are typically thought of as being made at each time point, and one needs to ensure that they satisfy a dynamic programming principle (so it is reasonable to make plans for future decisions). In our setting, we naturally consider making a single decision after repeated observations, and seek an empirical basis on which to do so.
\item In a dynamically consistent setting, one typically has that the limit as the number of observations increases is the true value of the outcome, that is $\CE(\xi|\F_t)\to \xi$ as $t\to T$. In our setting, we will instead typically have that $\CE_{\mathcal{Q}|\mathbf{x}_N}^{k,\gamma}(\phi(X)) \to E_P[\phi(X)]$ as $N\to \infty$ in $P$-probability for all $P\in\mathcal{Q}$, that is, our expectation converges to the expectation under the `true' measure.
\item In a dynamically consistent setting, if we assume our observations are independent of $X$ (under all $Q\in\mathcal{Q}$), we will typically learn nothing about the value of $X$, so $\CE(\phi(X)|\F_N)$ is constant. In our setting, independent observations are needed to teach us about the distribution of $X$, and hence give useful information, which is incorporated into our expectations.
\item In a dynamically consistent setting, the underlying models are typically required to be stable under pasting through time. Conceptually, this implies that there is no significant link assumed between the `true' model governing our observations at different times\footnote{In particular, new observations cannot affect our opinions on the measure which was active at an earlier time. As previously mentioned, this connects more generally with the concerns of Knight \cite{Knight1921}, who discusses the problem that observations at different times may be from different models. The difficulty lies in the fact that, without some presumption of homogeneity in nature, statistical inference is impossible.}. Conversely, in our setting, we typically assume that the underlying model is constant through time (i.e. our observations are iid), and hence repeated observations \emph{can} inform our view of the `true' model.
\end{itemize}
In \cite{Cohen2015b}, a filtering problem was considered, where $\xi$ was taken to depend on a hidden time-varying process, and the observations were used to filter the expected value of $\xi$. This was coupled with a nonlinear expectation in two ways. First, a DR-expectation approach was used in an initial calibration phase to estimate the underlying probabilistic structures of the model and their uncertainty, that is, the dynamics of the hidden process and the observation process. Secondly, the penalty function obtained from the DR-expectation approach was used to build a nonlinear expectation with good dynamic properties for new realizations of these processes, associated with an on-line filter. In this second setting, new information is incorporated into the risk assessment, but \emph{one does not recalibrate the estimation of the underlying probabilistic dynamics}.

Nevertheless, as we shall see, some `dynamic' properties of our nonlinear expectation are available. In particular, in Section \ref{sec:largesample} we shall consider the large-sample asymptotic  behaviour  of a DR-expectation.

\subsection{Exponentials and Entropy}

It will not be a surprise that there is a connection between the convex expectation we propose and a more traditional quantity in risk-averse decision making, namely the certainty equivalent under exponential utility.

\begin{definition}
 For a random variable $\xi$, under a reference measure $P$, the certainty equivalent under exponential utility has definition
\[\CE_{\mathrm{exp}}^k(\xi) = \frac{1}{k} \log E_P[\exp(k\xi)]\]
where $k>0$ is a risk-aversion parameter. Defining the relative entropy (or Kullback--Liebler divergence) 
\[D_{\mathrm{KL}}(Q||P) = E_Q\Big[\log\Big(\frac{dQ}{dP}\Big)\Big] = E_P\Big[\frac{dQ}{dP}\log\Big(\frac{dQ}{dP}\Big)\Big]\]
we have the representation (see, for example, \cite{Follmer2002})
\[\CE_{\mathrm{exp}}^k(\xi) = \sup_{Q\in \CM_1}\Big\{E_Q[\xi] - \frac{1}{k} D_{\mathrm{KL}}(Q||P)\Big\}.\]
\end{definition}
Replacing expectations by conditional expectations, we obtain the conditional certainty equivalent.

\begin{remark}
It is useful to consider the relative entropy of the law of $X$ separately from the other observations $\{X_n\}_{n\in\bN}$. We therefore define
\[D_{\mathrm{KL}|X}(Q||P) =\int \log\Big(\frac{f(x,Q)}{f(x,P)}\Big)f(x,Q)dx.\]
Assuming $\hat Q\approx P$, where $P$ is the `real world' probability measure, in light of the law of large numbers we hope for a simple connection, at least asymptotically, between the scaled deviance
\[\frac{1}{N}\alpha_{\mathcal{Q}|\mathbf{x}}(Q) = -\frac{1}{N}\sum_{n=1}^N \log \Big(\frac{f(X_n, Q)}{f(X_n, \hat Q)}\Big)\approx -E_P\Big[ \log \Big(\frac{f(X, Q)}{f(X, \hat Q)}\Big)\Big]\approx D_{\mathrm{KL}|X}(P||Q)\]
 and the penalty in the exponential utility, that is, $D_{\mathrm{KL}}(Q||P)$. In general, this is made more difficult by the fact we have an infinite family of measures $\mathcal{Q}$, and by the lack of symmetry in the relative entropy, as $D_{\mathrm{KL}}(Q||P)\neq D_{\mathrm{KL}}(P||Q)$. We shall pursue this connection in the coming section.
\end{remark}

\begin{remark}
Consider the case of an uncertain symmetric location parameter, that is when $\mathcal{Q}$ is parameterized by an unknown quantity $\theta$, and under each $Q^\theta$, $\{X_n\}_{n=1}^N$ are iid with density $f(|x-\theta|)$. Then one can show that
\[D_{\mathrm{KL}|X}(Q^\theta||Q^{\theta'}) = D_{\mathrm{KL}|X}(Q^{\theta'}||Q^\theta).\] Therefore, in these cases, one could try and equate the nonlinear expectation and the exponential certainty equivalent under the MLE measure $\hat Q$. However, this requires that the measure $Q\in \CM_1$ which maximizes
\[E_Q[X]-D_{\mathrm{KL}|X}(Q||\hat Q)\]
is within the class $\mathcal{Q}$. That is, the optimizer differs from $\hat Q$ only through a change of the parameter $\theta$. In general, this is only the case when the models considered are Gaussian with uncertain mean. Then, with $P=\hat Q$, we have (cf. Example \ref{exampleNormal})
\[\CE^k_{\mathrm{exp}}(\beta X) = \beta \bar X + \frac{\beta^2 k}{2} \mathrm{Var}(X).\]
\end{remark}

\begin{remark}\label{rem:parametricextension}
One extension of our approach is to change the penalty function to include a entropy term taken in the `other' direction, that is, to use the penalty
\[\alpha^{k,\beta}(Q) = \inf_{Q'\in \mathcal{Q}}\Big\{\frac{1}{\beta} D_{\mathrm{KL}}(Q||Q') + \frac{1}{k} \alpha_{\mathcal{Q}|\mathbf{x}}(Q')\Big\}\]
for some $\beta>0$. This is particularly of interest where $\mathcal{Q}$ is a parametric family (and we think our model should be similar, if not precisely the same, as a parametric model) or where we wish to include risk aversion (as measured using exponential utility\footnote{Replacing entropy with a different penalty would allow for other utility functionals to be considered, if desired.}). This is well defined for all measures $Q \in \CM_1$, and gives the expectation:
\[\sup_{Q\in \CM_1}\big\{E_Q[\xi] -\alpha^{k,\beta}(Q)\big\} = \frac{1}{\beta} \log \CE_{\mathcal{Q}|\mathbf{x}}^{k,1}(e^{\beta \xi}).\]
\end{remark}

\section{Large-sample theory}\label{sec:largesample}

In this section, we shall seek to study the large-sample theory of the nonlinear expectation $\CE^{k,\gamma}_{\mathcal{Q}|\mathbf{x}}$. In practice, this is particularly useful to give approximations and qualitative descriptions of its behaviour.

Throughout this section, we shall assume that we have observations $\{X_n\}_{n\in\bN}$, and a family of measures $\mathcal{Q}$ under which $X, \{X_n\}_{n\in\bN}$ are iid random variables with corresponding densities $f(x; Q)dx$. We write $\mathbf{x}_N=(X_1,...,X_N)$. We shall be interested in determining the behaviour, for large $N$, of $\CE_{\mathcal{Q}|\mathbf{x}_N}^{k,\gamma}(\phi(X))$, where $\phi$ is a bounded function. For simplicity, we shall assume that the MLE exists (however our results can be extended to remove this assumption, with an increase in notational complexity). We write $\hat Q_N$ for the $\mathcal{Q}$-MLE based on observations $\mathbf{x}_N$.

 Given the lack of positive homogeneity, it is interesting to consider the behaviour of $c^{-1}_N \CE_{\mathcal{Q}|\mathbf{x}}^{k,\gamma}(c_N\xi)$, where $c_N$ has prescribed growth in $N$. The following lemma allows us to instead vary the uncertainty parameter $k$.

\begin{lemma}
For any $k>0$, any $\gamma<\infty$, any random variable $\xi$,
\[k^{-\gamma}\CE_{\mathcal{Q}|\mathbf{x}}^{1,\gamma}(k^\gamma\xi) = \CE_{\mathcal{Q}|\mathbf{x}}^{k,\gamma}(\xi).\]
\end{lemma}
\begin{proof}
\[\begin{split}k^{-\gamma}\CE_{\mathcal{Q}|\mathbf{x}}^{1,\gamma}(k^\gamma \xi) &= k^{-\gamma}\sup_{Q\in\mathcal{Q}}\Big\{E_Q[k^\gamma \xi|\mathbf{x}] -\big(\alpha_{\mathcal{Q}|\mathbf{x}}(Q)\big)^\gamma\Big\}\\
&= \sup_{Q\in\mathcal{Q}}\Big\{E_Q[ \xi|\mathbf{x}] -\Big(\frac{1}{k}\alpha_{\mathcal{Q}|\mathbf{x}}(Q)\Big)^\gamma\Big\}= \CE_{\mathcal{Q}|\mathbf{x}}^{k,\gamma}(\xi)
\end{split}\]
\end{proof}

To enable a simple description of our asymptotic results, we recall the following definition
\begin{definition}
 For functions $f$ and $g$, we shall shall write $f=O_P(g(N))$ whenever $f(N)/g(N)$ is stochastically bounded (that is, $P(|f(N)/g(N)|>M)\to 0$ as $M\to \infty$ for each $N$) and $f=o_P(g(N))$ whenever $P\text{-}\lim_{N\to\infty}|f(N)/g(N)|=0$. Note that this depends on the choice of measure $P$. The subscript $P$ is omitted in the classical case (i.e. when the convergence is not in probability).
\end{definition}

\subsection{Nonparametric results}
We now give some results when we do not assume $\mathcal{Q}$ comes from a `nice' parametric family. Given we will take a supremum over a family of densities, we need a uniform version of the law of large numbers.
For this reason, we make the following definition.

\begin{definition}
 We say a family $\mathcal{Q}$ is a Glivenko--Cantelli--Donsker class of measures (or GCD class) if, for any $P\in\mathcal{Q}$, 
\[\sup^*_{Q\in \mathcal{Q}}\Big\{\frac{\alpha_{\mathcal{Q}|\mathbf{x}_N}(Q)}{N} - D_{\mathrm{KL}|X}(P||Q)\Big\}=O_P(N^{-1/2})\]
(where $\sup^*$ refers to the minimal measurable envelope, to ensure measurability).
\end{definition}
\begin{remark}
The reason for the name (Glivenko--Cantelli--Donsker) is simply because, if we have a uniform weak Glivenko--Cantelli theorem when indexing the empirical distribution by the family of log-likelihoods, then the term in brackets converges in probability to $0$. If we also have a uniform Donkser theorem, then we know that $\sqrt{N}\big(\alpha_{\mathcal{Q}|\mathbf{x}_N}(Q)/N - D_{\mathrm{KL}}(P||Q)\big)$ converges (in some sense) to a finite-valued Gaussian process, which implies it is of the order stated.

It is easy to show, given consistency of the MLE and some integrability, that a finite family $\mathcal{Q}$ is always a GCD class.
\end{remark}

\begin{lemma}\label{lem:GCDconditions}
 Suppose $\mathcal{Q}$ is a family of measures such that $\{X_n\}_{n\in\bN}$ are iid with respective densities $\{f(\cdot; Q)\}_{Q\in\mathcal{Q}}$ which satisfy
\begin{enumerate}[i)]
 \item there is a compact set $K$, such that for every $P\in \mathcal{Q}$, $P(X\in K)=1$
 \item there is $\epsilon>0$ such that $\epsilon <\inf_{Q\in\mathcal{Q}}\min_{x\in K}f(x, Q)$,
 \item there is $C<\infty$ and $\rho>1/2$ such that, for all $P,Q\in\mathcal{Q}$, the likelihood ratios $f(\cdot, Q)/f(\cdot, P)$ take values in $[C^{-1},C]$ and are uniformly $\rho$-H\"older continuous with norm $C$, that is, writing $L(x) = f(x, Q)/f(x, P)$,
\[\sup_{x,y} \frac{|L(x)-L(y)|}{|x-y|^\rho}\leq C.\]
\end{enumerate}
 Then $\mathcal{Q}$ is a GCD class of measures.
\end{lemma}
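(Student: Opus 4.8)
The plan is to reduce the defining estimate to a uniform law of large numbers --- indeed a Donsker-type maximal inequality --- for the class of log-likelihood-ratio functions, and then to exploit the fact that $\rho>1/2$ is precisely the metric-entropy threshold at which the associated bracketing integral converges in dimension one.

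First I would fix $P\in\mathcal{Q}$ and unpack the divergence in the iid case. Writing $P_N$ for the empirical measure of $\mathbf{x}_N$, $Ph:=E_P[h]$, and introducing the \emph{log-likelihood-ratio class}
\[\mathcal{H}:=\Big\{h_Q:=\log\big(f(\cdot,Q)/f(\cdot,P)\big)\ :\ Q\in\mathcal{Q}\Big\},\]
the term $P_N\log f(\cdot,P)$ cancels between $-N^{-1}\log L(Q|\mathbf{x}_N)$ and $\sup_{\tilde Q}N^{-1}\log L(\tilde Q|\mathbf{x}_N)$, so that $\alpha_{\mathcal{Q}|\mathbf{x}_N}(Q)/N=-P_N h_Q+\sup_{\tilde Q}P_N h_{\tilde Q}$. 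Since $Ph_Q=-D_{\mathrm{KL}|X}(P||Q)$, Gibbs' inequality gives $\sup_{\tilde Q}Ph_{\tilde Q}=0$ (attained at $\tilde Q=P$), and hence
\[\frac{\alpha_{\mathcal{Q}|\mathbf{x}_N}(Q)}{N}-D_{\mathrm{KL}|X}(P||Q)=(P-P_N)h_Q+\Big(\sup_{\tilde Q}P_N h_{\tilde Q}-\sup_{\tilde Q}Ph_{\tilde Q}\Big).\]
Taking the supremum over $Q$ and bounding both terms by the uniform deviation $\|P_N-P\|_{\mathcal H}:=\sup_{Q\in\mathcal{Q}}|(P_N-P)h_Q|$ reduces the whole problem to showing $\|P_N-P\|_{\mathcal H}=O_P(N^{-1/2})$.

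Next I would verify that $\mathcal{H}$ is a uniformly bounded, uniformly Hölder class on the common compact support $K$. By (i) every $P\in\mathcal{Q}$ is supported in $K$, by (ii) the densities are bounded below by $\epsilon>0$ there (so the ratios and their logarithms are well defined and finite), and by (iii) each ratio $L_Q:=f(\cdot,Q)/f(\cdot,P)$ lies in $[C^{-1},C]$ and is $\rho$-Hölder with norm $C$. Since $\log$ is Lipschitz with constant $C$ on $[C^{-1},C]$, each $h_Q=\log L_Q$ is bounded by $\log C$ and $\rho$-Hölder with norm at most $C^2$; thus $\mathcal{H}$ lies in a ball of the Hölder space $C^\rho(K)$ of fixed radius, with constant envelope $\log C$. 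The classical Kolmogorov--Tikhomirov entropy estimate for Hölder balls then gives $\log N_{[]}(\varepsilon,\mathcal{H},L_2(P))\lesssim \varepsilon^{-1/\rho}$ (sup-norm brackets dominate $L_2(P)$ brackets because $P$ is a probability measure), the exponent $1/\rho$ reflecting the one-dimensionality of the observations.

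Finally I would apply a bracketing maximal inequality: the expected uniform deviation $E^*\big[\sqrt N\,\|P_N-P\|_{\mathcal H}\big]$ is controlled by the bracketing integral $\int_0^{\log C}\sqrt{\log N_{[]}(\varepsilon,\mathcal{H},L_2(P))}\,d\varepsilon\lesssim\int_0^{\log C}\varepsilon^{-1/(2\rho)}\,d\varepsilon$, which is finite precisely when $1/(2\rho)<1$, i.e.\ $\rho>1/2$ --- exactly assumption (iii). (Equivalently, one concludes $\mathcal{H}$ is $P$-Donsker, so $\sqrt N(P_N-P)$ is asymptotically tight in $\ell^\infty(\mathcal{H})$.) Markov's inequality then yields $\|P_N-P\|_{\mathcal H}=O_P(N^{-1/2})$, and combined with the reduction above this establishes the GCD property, the $\sup^*$ in the definition being handled by the outer-expectation formulation of the maximal inequality. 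I expect the crux to be this entropy/maximal-inequality step: the bound $\log N_{[]}\lesssim\varepsilon^{-1/\rho}$ and the consequent role of $\rho>1/2$ as the borderline for integrability is where the hypotheses are genuinely consumed; the algebraic reduction, the verification of the Hölder bounds, and the measurability bookkeeping are comparatively routine.
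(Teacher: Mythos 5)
Your proof is correct, and it follows the same overall strategy as the paper (reduce the GCD bound to a uniform $O_P(N^{-1/2})$ deviation bound for the empirical process indexed by the log-likelihood-ratio class, which is a H\"older class whose exponent $\rho>1/2$ is exactly the Donsker threshold in dimension one), but the execution of both main steps is genuinely different. For the algebraic reduction, the paper argues through the MLE explicitly: it first proves the uniform bound $\gamma(N)=\sup_{Q}\big|-\tfrac1N\sum_n\log\big(f(X_n;Q)/f(X_n;P)\big)-D_{\mathrm{KL}|X}(P||Q)\big|=O_P(N^{-1/2})$, then uses the defining property of $\hat Q_N$ to get $0\leq D_{\mathrm{KL}|X}(P||\hat Q_N)\leq\gamma(N)$, and combines three estimates by the triangle inequality; your route via Gibbs' inequality ($\sup_{\tilde Q}Ph_{\tilde Q}=0$) and the elementary bound $\big|\sup_{\tilde Q}P_Nh_{\tilde Q}-\sup_{\tilde Q}Ph_{\tilde Q}\big|\leq\sup_{\tilde Q}\big|(P_N-P)h_{\tilde Q}\big|$ is tidier and, unlike the paper's write-up, never invokes existence of the MLE. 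For the key empirical-process estimate, the paper does not use bracketing entropy at all: it exploits assumption (ii) to show $F_P^{-1}$ is $C^1$ with uniformly bounded derivative, changes variables to $U_n=F_P(X_n)$ (iid uniform on $[0,1]$), so that the index set becomes a single fixed H\"older ball $\mathbf{F}_\rho$ independent of $P$ and $Q$, and then cites the tightness/stochastic-boundedness result of Shorack and Wellner (after Strassen--Dudley) for the uniform empirical process over $\mathbf{F}_\rho$. Your Kolmogorov--Tikhomirov bound $\log N_{[]}(\varepsilon)\lesssim\varepsilon^{-1/\rho}$ plus a bracketing maximal inequality achieves the same conclusion working directly under $P$, makes the role of $\rho>1/2$ transparent as the integrability threshold for $\int\varepsilon^{-1/(2\rho)}\,d\varepsilon$, and uses hypothesis (ii) only to keep the ratios well defined; what the paper's quantile transformation buys is a single off-the-shelf citation for a $P$-independent class, at the cost of genuinely needing the lower density bound and (implicitly) an interval-like support. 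Both arguments consume the hypotheses at the same point, so your proposal is a legitimate, arguably more self-contained, alternative proof.
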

\begin{proof}
 See Appendix.
\end{proof}

We can now prove the following versions of the law of large numbers and the central limit theorem. We begin with the case $\gamma=1$.
\begin{theorem}\label{thm:lln1case}
Suppose $\mathcal{Q}$ is a GCD class of measures and $k_N=o(N)$. Consider a random variable $\xi=\phi(X)$, where $\phi$ is a bounded measurable function and $X, \{X_n\}_{n\in \mathbb{N}}$ are iid under every $Q\in\mathcal{Q}$.
\begin{enumerate}[(i)]
\item $\CE^{k_N,1}_{\mathcal{Q}|\mathbf{x}_N}$ is a consistent estimator, that is
\[\CE_{\mathcal{Q}|\mathbf{x}_N}^{k_N,1}(\xi) \to_P E_P[\xi]\] as $N\to\infty$ for every $P\in \mathcal{Q}$.
\item We have the asymptotic behaviour (as $N\to\infty$, for each $P\in \mathcal{Q}$)
\[\CE_{\mathcal{Q}|\mathbf{x}_N}^{k_N,1}(\xi) \leq E_P[\xi]+\frac{k_N }{N}\frac{1}{2}\mathrm{Var}_P(\xi) + O\Big(\Big(\frac{k_N}{N}\Big)^2\Big)+O_P\Big(\frac{N^{1/2}}{k_N}\Big)\]
with equality whenever $P$ is such that, for all $N$ sufficiently large, the measure $\tilde P$ with density
\[f(x; \tilde P)= \frac{f(x; P)}{\lambda - \frac{k_N}{N}\phi(x)}\]
(where $\lambda>\sup_x\phi(x)$ is chosen to ensure this is a probability density) is also in $\mathcal{Q}$. (This can be thought of as related to the central limit theorem, cf. Example \ref{exampleNormal}.)
\end{enumerate}
\end{theorem}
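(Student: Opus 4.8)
The plan is to separate the randomness in the empirical penalty from a purely deterministic variational problem. First I would rewrite
\[\CE_{\mathcal{Q}|\mathbf{x}_N}^{k_N,1}(\xi)=\sup_{Q\in\mathcal{Q}}\Big\{E_Q[\xi]-\frac{N}{k_N}\cdot\frac{\alpha_{\mathcal{Q}|\mathbf{x}_N}(Q)}{N}\Big\}\]
and invoke the GCD hypothesis, in its two-sided form (justified by the Donsker remark), to replace $\alpha_{\mathcal{Q}|\mathbf{x}_N}(Q)/N$ by the deterministic divergence $D_{\mathrm{KL}|X}(P\|Q)$ uniformly in $Q\in\mathcal{Q}$, with error $\sup^*_Q|\alpha_{\mathcal{Q}|\mathbf{x}_N}(Q)/N-D_{\mathrm{KL}|X}(P\|Q)|=O_P(N^{-1/2})$. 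Because this uniform error is amplified by the factor $N/k_N$, it contributes precisely the stochastic term $O_P(N^{1/2}/k_N)$, and the problem reduces to analysing the deterministic value $V_N:=\sup_{Q\in\mathcal{Q}}\{E_Q[\xi]-(N/k_N)D_{\mathrm{KL}|X}(P\|Q)\}$.

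For $V_N$ I would relax the supremum from $\mathcal{Q}$ to all of $\CM_1$, which only increases it. Parametrising a candidate law of $X$ by its density ratio $h=dQ/dP$ and writing $\delta:=k_N/N$, the relaxed problem becomes $\sup_h\{E_P[\phi h]+\delta^{-1}E_P[\log h]\}$ subject to $E_P[h]=1$; the pointwise Euler--Lagrange condition gives the unique optimiser $h=(\lambda-\delta\phi)^{-1}$ (after absorbing $\delta$ into the multiplier), which is exactly the tilted measure $\tilde P$ appearing in the statement, with $\lambda$ the normalising constant. Substituting this optimiser and using boundedness of $\phi$ to Taylor expand $\lambda=1+E_P[\phi]\,\delta+\mathrm{Var}_P(\phi)\,\delta^2+O(\delta^3)$, a short computation in which the $E_P[\phi]^2$ contributions cancel yields the relaxed value $E_P[\xi]+\tfrac12\delta\,\mathrm{Var}_P(\xi)+O(\delta^2)$; this is the ``central limit'' term, matching Example~\ref{exampleNormal}.

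Assembling these gives part (ii): since $\mathcal{Q}\subseteq\CM_1$, $V_N$ is bounded above by the relaxed value, so $\CE\le E_P[\xi]+\tfrac12(k_N/N)\mathrm{Var}_P(\xi)+O((k_N/N)^2)+O_P(N^{1/2}/k_N)$. When $\tilde P\in\mathcal{Q}$ the relaxed optimiser is feasible, so $V_N$ equals the relaxed value, and evaluating the original supremum at $Q=\tilde P$ together with the GCD bound on $\alpha(\tilde P)$ supplies the matching lower bound to within $O_P(N^{1/2}/k_N)$, giving equality. For the consistency statement (i) the crude error $O_P(N^{1/2}/k_N)$ is useless when $k_N$ does not grow (e.g. $k_N$ bounded), so I would argue it directly: the lower bound $\CE\ge E_{\hat Q_N}[\xi]\to_P E_P[\xi]$ follows from $\alpha(\hat Q_N)=0$ and consistency of the MLE, while for the upper bound I would split $\mathcal{Q}$ at a threshold $\{D_{\mathrm{KL}|X}(P\|Q)\le\eta\}$, using Pinsker's inequality and boundedness of $\phi$ to make $E_Q[\xi]\le E_P[\xi]+\epsilon$ on the near set, and using $(N/k_N)(D_{\mathrm{KL}|X}(P\|Q)-O_P(N^{-1/2}))\to\infty$ to drive the penalised value to $-\infty$ on the far set.

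The main obstacle is exactly this gap between (i) and (ii): the generic error incurred in linearising the penalty is of order $N^{1/2}/k_N$, which dominates the $k_N/N$-scale quantities of interest whenever $k_N=o(N^{1/2})$, so consistency cannot be read off from the expansion and the two regimes of $Q$ (near and far from $P$) must be controlled by hand. A secondary point, on which both the relaxation upper bound and the $\tilde P$ lower bound rely, is that the GCD convergence must be used in a uniform, two-sided way: the one-sided $\sup^*$ in the definition supplies the direction needed for the $\tilde P$ lower bound, while the reverse direction (needed for the relaxation upper bound and for killing the far measures) is what the Donsker remark furnishes.
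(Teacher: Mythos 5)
Your proposal is correct, and for part (ii) it is essentially the paper's own argument: the same uniform replacement of $\alpha_{\mathcal{Q}|\mathbf{x}_N}(Q)/N$ by $D_{\mathrm{KL}|X}(P||Q)$, with the $O_P(N^{-1/2})$ error amplified by $N/k_N$ into the $O_P(N^{1/2}/k_N)$ term; the same relaxation of the supremum beyond $\mathcal{Q}$ (the paper does this implicitly, by solving the calculus-of-variations problem over all densities and getting an inequality); the same tilted optimiser $f(\cdot;P)/(\lambda-\tfrac{k_N}{N}\phi)$; and the same Taylor expansion of the multiplier, giving $E_P[\xi]+\tfrac{k_N}{2N}\mathrm{Var}_P(\xi)+O((k_N/N)^2)$ with equality exactly when the tilted measure lies in $\mathcal{Q}$. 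Where you genuinely diverge is part (i). The paper does not argue consistency from scratch: it notes that $\CE^{k,1}_{\mathcal{Q}|\mathbf{x}_N}(\xi)-E_{\hat Q_N}[\xi]$ is nonnegative and increasing in $k$, so one may assume $N^{1/2}/k_N\to\infty$ fails, i.e.\ take $N^{1/2}/k_N\to 0$ without loss of generality, in which regime the bound from (ii) sandwiches $\CE^{k_N,1}_{\mathcal{Q}|\mathbf{x}_N}(\xi)$ between $E_{\hat Q_N}[\xi]$ and a quantity converging to $E_P[\xi]$. Your direct route --- lower bound from the MLE, upper bound by splitting $\mathcal{Q}$ at $\{D_{\mathrm{KL}|X}(P||Q)\le\eta\}$, with Pinsker plus boundedness of $\phi$ on the near set and $(N/k_N)(\eta-O_P(N^{-1/2}))\to_P\infty$ on the far set --- is also valid, and is in fact the mechanism the paper itself uses for the $\gamma=\infty$ case (Theorem \ref{thm:llninfcase}). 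The trade-off: the paper's monotonicity trick is shorter once (ii) is in hand, while yours is self-contained, makes transparent why $k_N=o(N)$ alone suffices, and would survive in settings where the expansion (ii) is unavailable. Your closing observation is also a point the paper elides: the GCD definition only bounds the one-sided envelope $\sup^*_Q\{\alpha_{\mathcal{Q}|\mathbf{x}_N}(Q)/N-D_{\mathrm{KL}|X}(P||Q)\}$, whereas both your proof and the paper's opening step use the two-sided uniform bound; the latter is what Lemma \ref{lem:GCDconditions} actually establishes, so nothing breaks, but you are right that the definition as stated is weaker than what is invoked.
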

\begin{remark}
Given the error of the expectation based on the $\mathcal{Q}$-MLE is asymptotically of the order of $1/\sqrt{N}$, the requirement implied by (ii) that $k_N$ grows faster than $\sqrt{N}$, is unsurprising, as this is what is needed to ensure that the risk aversion term $\frac{k_N}{2N}\mathrm{Var}(\xi)$ asymptotically dominates the statistical error of the estimation of $E_P[\xi]$.
\end{remark}
\begin{proof}
We begin by proving (ii). As $\mathcal{Q}$ is a GCD class, we know that, 
\[\Big|\frac{1}{N}\alpha_{\mathcal{Q}|\mathbf{x}_N}(Q) - D_{\mathrm{KL}|X}(P||Q)\Big|\leq O_P(N^{-1/2}),\]
with error bounded independently of $Q$. Hence, uniformly in $Q$,
\[\Big|\frac{1}{k_N}\alpha_{\mathcal{Q}|\mathbf{x}_N}(Q) - \frac{N}{k_N}D_{\mathrm{KL}|X}(P||Q)\Big|\leq O_P(N^{1/2}/k_N).\]
Calculating $\CE^{k_N,1}_{\mathcal{Q}|\mathbf{x}_N}(\xi)$, we have
\[\begin{split}
\CE^{k_N,1}_{\mathcal{Q}|\mathbf{x}_N}(\xi) &= \sup_{Q\in\mathcal{Q}}\Big\{E_Q[\xi] - \frac{1}{k_N}\alpha_{\mathcal{Q}|\mathbf{x}_N}(Q)\Big\}\\
& = \sup_{Q\in\mathcal{Q}}\Big\{E_Q[\xi] - \frac{N}{k_N}D_{\mathrm{KL}|X}(P||Q)\Big\}+O_P(N^{1/2}/k_N).\end{split}\]
We shall now focus on solving the problem under the assumption that the penalty is given by $(N/k_N)D_{\mathrm{KL}}(P||Q)$.

For fixed $N$, we can try and solve this simplified problem directly. Assuming the optimum will be attained with a measure denoted $Q^g$, this corresponds to finding the density $g=f(\cdot, Q^g)$. Calculus of variations yields
\[\phi + \frac{N}{k_N}\Big(\frac{g}{f(\cdot, P)} +\lambda\Big) = 0,\]
or equivalently 
\[g = \frac{f(\cdot, P)}{\lambda - \frac{k_N}{N}\phi},\]
where $\lambda$ is chosen to ensure $g$ is a density, that is, $E_P[(\lambda-\frac{k_N}{N}\xi)^{-1}]=1$. This requires $\lambda>\frac{k_N}{N}\sup_x\phi(x)$ (this is the reason we have assumed $\xi=\phi(X)$ is bounded). As the map $\lambda\mapsto (\lambda-\frac{k_N}{N}\phi(x))^{-1}$ is monotone, we also know that the corresponding value of $\lambda$ is unique and 
\[\lambda\in \Big[ 1+\frac{k_N}{N}\inf_x\phi(x),\quad  1+\frac{k_N}{N}\sup_x\phi(x)\Big].\]
This avoids inconsistency with the requirement $\lambda>\frac{k_N}{N}\sup_x\phi(x)$ whenever $N$ is large enough that $\frac{N}{k_N}>2\sup_x |\phi(x)|$.  For every fixed large $N$, we have a compact set of values for $\lambda$. Therefore, we can assume $(\lambda-\frac{k_N}{N}\xi)^{-1}$ is uniformly approximated by its Taylor series in $\lambda$ around $\lambda=1+\frac{k_N}{N}E_P[\xi]$. Furthermore, we immediately see the first approximation 
\[\lambda = 1+\frac{k_N}{N}E_P[\xi]+O(k_N/N).\]

Expanding the Taylor series of $(\lambda-\frac{k_N}{N}\xi)^{-1}$, we have 
\[1= E_P\Big[1 - \Big(\lambda-1 - \frac{k_N}{N} \xi\Big) +\Big(\lambda -1 - \frac{k_N}{N} \xi\Big)^2+...\Big]\]
or equivalently
\begin{equation}\label{eq:taylorstep}
\lambda=1 +\frac{k_N}{N}  E_P[\xi] +E_P\Big[\Big(\lambda -1 - \frac{k_N}{N} \xi\Big)^2\Big]+O\Big(E_P\Big[\Big(\lambda -1 - \frac{k_N}{N} \xi\Big)^3\Big]\Big).\end{equation}
Substituting our first approximation of $\lambda$ on the right hand side of \eqref{eq:taylorstep}, we have
\[\lambda = 1 + \frac{k_N}{N} E_P[\xi] +\Big(\frac{k_N}{N}\Big)^2\mathrm{Var}_P[\xi]+O\Big(\Big(\frac{k_N}{N}\Big)^2\Big).\]
Substituting this second approximation back into \eqref{eq:taylorstep}, we observe that the error can be taken to be $O((k_N/N)^3)$, rather than $O((k_N/N)^2)$. 

We can now approximate our convex expectation. We know that
\[\begin{split}
E_{Q^g}[\xi] &= E_P\Big[\frac{\xi}{\lambda-\frac{k_N}{N}\xi}\Big]=E_P\Big[\xi\Big(1-\frac{k_N}{N}(E_P[\xi]-\xi) +O((k_N/N)^2)\Big)\Big]\\
&=E_P[\xi] + \frac{k_N}{N}\mathrm{Var}_P[\xi] +O((k_N/N)^2)
\end{split}\]
and similarly
\[
E_P\Big[\log\Big(\frac{dP}{dQ^g}\Big)\Big]=E_{P}\Big[\log\Big(\lambda-\frac{k_N}{N}\xi\Big)\Big]=\frac{1}{2}\Big(\frac{k_N}{N}\Big)^2\mathrm{Var}_P[\xi] + O\Big(\Big(\frac{k_N}{N}\Big)^3\Big).
\]
Hence we can calculate the desired approximation
\begin{equation}\label{eq:approximationE1}
\begin{split}\CE^{k_N,1}_{\mathcal{Q}|\mathbf{x}_N}(\xi) 
&= \sup_{Q\in\mathcal{Q}}\Big\{E_Q[\xi] - \frac{N}{k_N}D_{\mathrm{KL}|X}(P||Q)\Big\}+O_P\Big(\frac{N^{1/2}}{k_N}\Big)\\
&\leq E_{Q^g}[\xi] - \frac{N}{k_N}E_P\Big[\log\Big(\frac{dP}{dQ^g}\Big)\Big]+O_P\Big(\frac{N^{1/2}}{k_N}\Big)\\
&=E_P[\xi]+\frac{k_N}{2N}\mathrm{Var}_P(\xi) + O\Big(\Big(\frac{k_N}{N}\Big)^2\Big)+O_P\Big(\frac{N^{1/2}}{k_N}\Big).\end{split}\end{equation}
with equality whenever $Q^g\in\mathcal{Q}$, as stated in (ii).

We now seek to reduce to the assumptions of (i). As increasing $k_N$ will only increase the (nonnegative) differences
\[\CE^{k_N,1}_{\mathcal{Q}|\mathbf{x}_N}(\xi)-E_{\hat Q_N}[\xi], \qquad E_{\hat Q}[\xi]+\CE^{k_N,1}_{\mathcal{Q}|\mathbf{x}_N}(-\xi)\]
and we know that $E_{\hat Q_N}[\xi]$ is consistent, we can assume that $N^{1/2}/k_N\to 0$ without loss of generality. Under this assumption, the right hand side of \eqref{eq:approximationE1} converges to $E_P[\xi]$, and hence we verify that $\CE^{k_N,1}_{\mathcal{Q}|\mathbf{x}_N}(\xi)\to_P E_P[\xi]$ as desired.
\end{proof}

\begin{remark}
Assuming that $\mathcal{Q}$ is sufficiently rich and $k_N/\sqrt{N}\to\infty$ (so the approximation of (ii) is useful) this result implies that, if we have simple estimators of the mean and variance of $\xi=\phi(X)$, for example the classical sample mean and variance of $\{\phi(X_n)\}_{n\in\bN}$ (which have error $O_P(N^{-1/2})$), then we have the asymptotic approximation
\[\CE_{\mathcal{Q}|\mathbf{x}}^{k_N,1}(\xi) \approx \widehat{E_P[\xi]}+\frac{k_N }{N}\frac{1}{2}\widehat{\mathrm{Var}_P(\xi)}.\]
It is well known that a mean-variance criterion is not a convex expectation in general, however retains convexity for Gaussian distributions (see, for example, \cite{Follmer2002}). In this setting, we can see that the central limit theorem renders our uncertainty approximately Gaussian, so no contradiction arises. 
\end{remark}

We will now consider the case $\gamma=\infty$. It is easy to check that the interval 
\[\mathcal{I}_N(\xi)=\Big[-\CE^{k,\infty}_{\mathcal{Q}|\mathbf{x}_N}(-\xi),\quad \CE^{k,\infty}_{\mathcal{Q}|\mathbf{x}_N}(\xi)\Big]\]
is a likelihood interval for $E[\xi]$, that is, it corresponds to the range of expectations under the measures in $\mathcal{Q}$ with likelihood at least $e^{-k}$. Such intervals are commonly used as generalizations of confidence intervals (see for example Hudson \cite{Hudson1971}, drawing on the well known results of Neyman and Pearson \cite{Neyman1933}). In this context, we shall see that a stronger property holds, as the confidence region is uniform in $\phi$. (See also Theorem \ref{thm:Wilksresult}.)
\begin{theorem}\label{thm:llninfcase}
Suppose $\mathcal{Q}$ is a GCD family and  $X,\{X_n\}_{n\in\bN}$ are iid under each $Q\in\mathcal{Q}$. Then if $k_N=o(N)$, the nonlinear expectation with $\gamma=\infty$ is a \emph{uniformly} consistent estimator, that is,
\[\sup_{\phi:|\phi|\leq 1} \Big\{\CE^{k_N,\infty}_{\mathcal{Q}|\mathbf{x}_N}(\phi(X))- E_P[\phi(X)]\Big\} \to_P 0 \quad \text{for all }P\in\mathcal{Q}.\]
\end{theorem}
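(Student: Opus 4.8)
The plan is to reduce this uniform-in-$\phi$ statement to a uniform-in-$Q$ total-variation bound over the likelihood region, and then to control that region using the GCD hypothesis together with Pinsker's inequality. First I would use the $x^\infty$ convention to rewrite the expectation in level-set form: the penalty $(k_N^{-1}\alpha_{\mathcal{Q}|\mathbf{x}_N}(Q))^\infty$ is $0$ when $\alpha_{\mathcal{Q}|\mathbf{x}_N}(Q)\le k_N$ and $+\infty$ otherwise, so
\[\CE^{k_N,\infty}_{\mathcal{Q}|\mathbf{x}_N}(\phi(X)) = \sup_{Q\in\mathcal{R}_N} E_Q[\phi(X)], \qquad \mathcal{R}_N:=\{Q\in\mathcal{Q}: \alpha_{\mathcal{Q}|\mathbf{x}_N}(Q)\le k_N\}.\]
This set is random but nonempty, since it always contains the MLE $\hat Q_N$ (for which $\alpha_{\mathcal{Q}|\mathbf{x}_N}=0$).

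Next I would interchange the two suprema. Since both are genuine suprema, $\sup_{|\phi|\le1}\sup_{Q\in\mathcal{R}_N}=\sup_{Q\in\mathcal{R}_N}\sup_{|\phi|\le1}$, and the inner supremum is the total variation distance between the $X$-marginals: taking $\phi=\mathrm{sign}(f(\cdot;Q)-f(\cdot;P))$ gives
\[\sup_{|\phi|\le1}\big\{E_Q[\phi(X)]-E_P[\phi(X)]\big\} = \int |f(x;Q)-f(x;P)|\,dx =: \|Q_X-P_X\|_{\mathrm{TV}}.\]
Hence the left-hand side of the theorem equals $\sup_{Q\in\mathcal{R}_N}\|Q_X-P_X\|_{\mathrm{TV}}$; it is automatically nonnegative (take $\phi\equiv0$), so it suffices to exhibit a vanishing upper bound.

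The final step bridges total variation and relative entropy via Pinsker's inequality, $\|Q_X-P_X\|_{\mathrm{TV}}\le\sqrt{2\,D_{\mathrm{KL}|X}(P||Q)}$, reducing the problem to showing $\sup_{Q\in\mathcal{R}_N} D_{\mathrm{KL}|X}(P||Q)\to_P 0$. Here the GCD hypothesis enters, read two-sidedly exactly as in the proof of Theorem \ref{thm:lln1case}, so that $\sup^*_{Q\in\mathcal{Q}}|\alpha_{\mathcal{Q}|\mathbf{x}_N}(Q)/N - D_{\mathrm{KL}|X}(P||Q)|=O_P(N^{-1/2})$. For every $Q\in\mathcal{R}_N$ this yields
\[D_{\mathrm{KL}|X}(P||Q)\le \frac{\alpha_{\mathcal{Q}|\mathbf{x}_N}(Q)}{N}+O_P(N^{-1/2})\le \frac{k_N}{N}+O_P(N^{-1/2}),\]
with the error bound uniform over $Q$. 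Since $k_N=o(N)$, the right-hand side tends to $0$ in $P$-probability, so $\sup_{Q\in\mathcal{R}_N} D_{\mathrm{KL}|X}(P||Q)\to_P 0$ and therefore $\sup_{Q\in\mathcal{R}_N}\|Q_X-P_X\|_{\mathrm{TV}}\le \sqrt{2(k_N/N+O_P(N^{-1/2}))}\to_P 0$, which is the claim.

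The main obstacle, and the reason the GCD hypothesis is precisely the right assumption, is that the divergence control must hold \emph{uniformly over the entire data-dependent region} $\mathcal{R}_N$ rather than at a single fixed $Q$; this uniformity is exactly what separates the $\gamma=\infty$ \emph{uniform} consistency from the pointwise statement, and is the strength the Glivenko--Cantelli--Donsker condition supplies. I would also flag that the direction of the estimate we need — bounding $D_{\mathrm{KL}|X}(P||Q)$ \emph{above} by $\alpha_{\mathcal{Q}|\mathbf{x}_N}(Q)/N$ — is the half of the two-sided GCD bound that makes ``small deviance forces small divergence'' work; pointwise this is just the law of large numbers, but uniformly it requires the full hypothesis. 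The measurability of the suprema is handled by the measurable-envelope $\sup^*$ built into the GCD definition, and is not a genuine difficulty.
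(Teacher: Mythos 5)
Your proposal is correct and follows essentially the same route as the paper's proof: rewrite $\CE^{k_N,\infty}_{\mathcal{Q}|\mathbf{x}_N}$ as a supremum over the likelihood region $\{Q:\alpha_{\mathcal{Q}|\mathbf{x}_N}(Q)\le k_N\}$, use the GCD bound to show every such $Q$ has $D_{\mathrm{KL}|X}(P||Q)\le k_N/N + O_P(N^{-1/2}) = o_P(1)$ uniformly, and conclude via Pinsker's inequality and the identification of $\sup_{|\phi|\le 1}\{E_Q[\phi(X)]-E_P[\phi(X)]\}$ with the total variation distance of the $X$-marginals. The only cosmetic difference is that you make the interchange of suprema and the extremal choice $\phi=\mathrm{sign}(f(\cdot;Q)-f(\cdot;P))$ explicit, where the paper states the resulting inequality directly.
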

\begin{proof}
Observe that 
\[\CE^{k_N,\infty}_{\mathcal{Q}|\mathbf{x}_N}(\phi(X)) = \sup_{Q:\alpha_{\mathcal{Q}|\mathbf{x}_N}(Q)\leq k_N}\{E_Q[\phi(X)]\}.\]
As $\mathcal{Q}$ is a GCD class, we know that for any $P\in \mathcal{Q}$,
\[\frac{1}{N}\alpha_{\mathcal{Q}|\mathbf{x}_N}(Q) = D_{\mathrm{KL}|X}(P||Q) + O_P(N^{-1/2})\]
and so, provided $k_N=o(N)$, 
\[\alpha_{\mathcal{Q}|\mathbf{x}_N}(Q) \leq k_N \qquad \Leftrightarrow \qquad D_{\mathrm{KL}|X}(P||Q) \leq \frac{k_N}{N}+ O_P(N^{-1/2}) = o_P(1)\]
with the terminal error uniform in $Q$. From Pinsker's inequality, looking only at the marginal law of $X$, we know that the total variation norm satisfies
\[\int |f(x;P)-f(x;Q)|dx = \big\|P|_{\sigma(X)}-Q|_{\sigma(X)}\big\|_{\mathrm{TV}} \leq \sqrt{2 D_{\mathrm{KL}|X}(P||Q)}.\]
Therefore,
\[\begin{split}
  &\sup_{\phi:|\phi|\leq 1} \Big\{\CE^{k_N,\infty}_{\mathcal{Q}|\mathbf{x}_N}(\phi(X))- E_P[\phi(X)]\Big\} \\
 & =\sup_{\phi:|\phi|\leq 1}\sup_{\{Q:\alpha_{\mathcal{Q}|\mathbf{x}_N}(Q) \leq k_N\}} \Big\{E_Q[\phi(X)]- E_P[\phi(X)]\Big\}\\
&=\sup_{\phi:|\phi|\leq 1}\sup_{\{Q:D_{\mathrm{KL}|X}(P||Q) \leq o_P(1)\}} \Big\{E_Q[\phi(X)]- E_P[\phi(X)]\Big\}\\
&\leq \sup_{\{Q:D_{\mathrm{KL}|X}(P||Q) \leq o_P(1)\}} \Big\{\big\|P|_{\sigma(X)}-Q|_{\sigma(X)}\big\|_{\mathrm{TV}}\Big\}\\
&\leq o_P(1).
  \end{split}
\]
It follows that the nonlinear expectation is a uniformly consistent estimator.
 \end{proof}

By a simple comparison, we also obtain consistency for all other $\gamma\in[1,\infty]$.

\begin{corollary}
If $\mathcal{Q}$ is a GCD class, $k_N = o(N)$ and $\gamma\in[1,\infty]$, the nonlinear expectation $\CE^{k,\gamma}_{\mathcal{Q}|\mathbf{x}_N}(\phi(\xi))$ is a consistent estimator of $E_P[\phi(\xi)]$. 
\end{corollary}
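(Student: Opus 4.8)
The plan is to sandwich $\CE^{k_N,\gamma}_{\mathcal{Q}|\mathbf{x}_N}(\phi(X))$ between two quantities already known to converge to $E_P[\phi(X)]$, exploiting that the extremal cases $\gamma=1$ and $\gamma=\infty$ have been settled in Theorems \ref{thm:lln1case} and \ref{thm:llninfcase}. Throughout I take $\phi$ bounded (as in those theorems) and use the hypothesis $k_N=o(N)$.

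For the lower bound I would use that the MLE $\hat Q_N$ satisfies $\alpha_{\mathcal{Q}|\mathbf{x}_N}(\hat Q_N)=0$, so the penalty $(k_N^{-1}\alpha_{\mathcal{Q}|\mathbf{x}_N}(\hat Q_N))^\gamma$ vanishes for every $\gamma\in[1,\infty]$; taking $Q=\hat Q_N$ in the defining supremum gives $\CE^{k_N,\gamma}_{\mathcal{Q}|\mathbf{x}_N}(\phi(X))\ge E_{\hat Q_N}[\phi(X)]$. Since $E_{\hat Q_N}[\phi(X)]\to_P E_P[\phi(X)]$ (consistency of the MLE, which also follows by squeezing $E_{\hat Q_N}[\phi(X)]$ between $-\CE^{k_N,\infty}_{\mathcal{Q}|\mathbf{x}_N}(-\phi(X))$ and $\CE^{k_N,\infty}_{\mathcal{Q}|\mathbf{x}_N}(\phi(X))$ via Theorem \ref{thm:llninfcase}), this yields $\CE^{k_N,\gamma}_{\mathcal{Q}|\mathbf{x}_N}(\phi(X))\ge E_P[\phi(X)]-o_P(1)$.

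For the upper bound I would split the supremum over $\mathcal{Q}$ according to the two regimes $\alpha_{\mathcal{Q}|\mathbf{x}_N}(Q)\le k_N$ and $\alpha_{\mathcal{Q}|\mathbf{x}_N}(Q)> k_N$. On the first region the penalty is nonnegative, so dropping it bounds that part of the supremum by $\sup_{Q:\alpha\le k_N}E_Q[\phi(X)]=\CE^{k_N,\infty}_{\mathcal{Q}|\mathbf{x}_N}(\phi(X))$. On the second region $k_N^{-1}\alpha_{\mathcal{Q}|\mathbf{x}_N}(Q)>1$, and since $x^\gamma\ge x$ for $x\ge1$ and $\gamma\ge1$, replacing the penalty by the smaller quantity $k_N^{-1}\alpha_{\mathcal{Q}|\mathbf{x}_N}(Q)$ and then enlarging the supremum to all of $\mathcal{Q}$ bounds that part by $\CE^{k_N,1}_{\mathcal{Q}|\mathbf{x}_N}(\phi(X))$. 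Hence $\CE^{k_N,\gamma}_{\mathcal{Q}|\mathbf{x}_N}(\phi(X))\le\max\{\CE^{k_N,\infty}_{\mathcal{Q}|\mathbf{x}_N}(\phi(X)),\CE^{k_N,1}_{\mathcal{Q}|\mathbf{x}_N}(\phi(X))\}$. Both terms equal $E_P[\phi(X)]+o_P(1)$: the first by Theorem \ref{thm:llninfcase} (extended from $|\phi|\le1$ to arbitrary bounded $\phi$ using the positive homogeneity of $\CE^{k_N,\infty}_{\mathcal{Q}|\mathbf{x}_N}$), the second by Theorem \ref{thm:lln1case}(i). Thus the maximum is $E_P[\phi(X)]+o_P(1)$, and combining with the lower bound gives $\CE^{k_N,\gamma}_{\mathcal{Q}|\mathbf{x}_N}(\phi(X))\to_P E_P[\phi(X)]$.

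The main obstacle, and the reason this is not a one-line monotonicity argument, is that $x\mapsto x^\gamma$ is \emph{not} monotone in $\gamma$ uniformly in $x\ge0$: for $x<1$ increasing $\gamma$ decreases the penalty, whereas for $x>1$ it increases it. Consequently neither $\CE^{k_N,1}_{\mathcal{Q}|\mathbf{x}_N}$ nor $\CE^{k_N,\infty}_{\mathcal{Q}|\mathbf{x}_N}$ dominates $\CE^{k_N,\gamma}_{\mathcal{Q}|\mathbf{x}_N}$ globally, which forces the comparison to be carried out separately on $\{\alpha\le k_N\}$ and $\{\alpha> k_N\}$. The only other point needing care is the passage from unit-bounded to general bounded $\phi$ in the $\gamma=\infty$ estimate, which is immediate from positive homogeneity.
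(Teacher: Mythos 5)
Your proof is correct and follows essentially the same route as the paper: squeeze $\CE^{k_N,\gamma}_{\mathcal{Q}|\mathbf{x}_N}$ between the MLE value $E_{\hat Q_N}[\phi(X)]$ (consistent because it lies in the interval $\mathcal{I}_N$ of Theorem \ref{thm:llninfcase}) and $\max\{\CE^{k_N,1}_{\mathcal{Q}|\mathbf{x}_N},\CE^{k_N,\infty}_{\mathcal{Q}|\mathbf{x}_N}\}$, both extremes being consistent by Theorems \ref{thm:lln1case} and \ref{thm:llninfcase}. Your two-regime split of the supremum at $\alpha_{\mathcal{Q}|\mathbf{x}_N}(Q)=k_N$ is exactly the verification of the paper's one-line inequality $|x|^\gamma\geq\min\{|x|,|x|^\infty\}$, and your explicit use of positive homogeneity to pass from $|\phi|\leq 1$ to general bounded $\phi$ fills a detail the paper leaves implicit.
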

\begin{proof}
We know that the two extreme cases $\gamma=1$ and $\gamma=\infty$ are both consistent, as is the MLE $E_{\hat Q_N}[\phi(\xi)]$ (this follows, for example, from the fact $E_{\hat Q_N}[\xi] \in \mathcal{I}_N$, where $\mathcal{I}_N$ is as in Theorem \ref{thm:llninfcase}. Furthermore, for any $\gamma$, as $|x|^\gamma \geq \min\{|x|, |x|^\infty\}$, it is easy to check from the definition that 
\[E_{\hat Q_N}[\xi]\leq \CE^{k_N,\gamma}_{\mathcal{Q}|\mathbf{x}_N}(\xi)\leq \max\Big\{\CE^{k_N,1}_{\mathcal{Q}|\mathbf{x}_N}(\xi), \CE^{k_N,\infty}_{\mathcal{Q}|\mathbf{x}_N}(\xi)\Big\}.\]
 The result follows.
\end{proof}

\begin{remark}
It seems likely that $\CE^{k,\gamma}_{\mathcal{Q}|\mathbf{x}}(\xi)$ also has an interpretation in terms of perturbing $E_P[\xi]$ by a term of the order of $E_P\Big[|\xi-E[\xi]|/\sqrt{N}\Big]^{2\gamma/(2\gamma-1)}$. These types of perturbation are rarely considered in other settings, so such results appear to be of purely technical interest.
\end{remark}

\subsection{Parametric results}\label{sec:parametric}

We now suppose that $\mathcal{Q}$ is a class of measures coming from a `nice' parametric family. In this setting, we can obtain more precise asymptotics by considering the divergence as a function of the parameter, rather than as a function of the abstract space of probability measures. For simplicity, we shall consider an exponential family of measures, which is general enough for many applications, but gives sufficient structure to obtain tight results. We shall also assume throughout that, for every $Q\in\mathcal{Q}$, $X, \{X_n\}_{n\in\bN}$ are iid with density $f(\cdot;Q)$.
\begin{definition}
 A distribution is said to come from the exponential family (in natural parameters) if the density can be written
\[f(x;Q) = h(x)\exp\Big\{ \langle \theta, T(x)\rangle -A(\theta)\Big\}.\]
Here $\theta$ is the parameter of $Q$, and is in an open subset $\Theta$ of $\bR^d$ for some $d$, $T$ is a (vector of) sufficient statistics, $h$ is a normalization function and $A$ is the log-partition function. We assume that $\mathcal{Q}$ corresponds to all those measures with parameters in $\Theta$, and write $\theta_Q$ for the parameters of $Q$, $Q^\theta$ for the measure associated with $\theta$, and $E_{\theta}$ for $E_{Q^\theta}$, etc...
\end{definition}

 The key result we shall use is that $A$ is convex and smooth (in particular has a continuous third derivative). We shall in fact use the following, slightly stronger condition.
\begin{assumption}
\begin{enumerate}[(i)]
 \item The Hessian $\mathfrak{I}_{\theta} = \partial^2 A(\theta)$ (commonly known as the information matrix) is (strictly) positive definite at every point of $\Theta$. 
 \item The $\mathcal{Q}$-MLE exists and is consistent, with probability tending to $1$ as $N\to\infty$ (that is, for every $Q\in \mathcal{Q}$, a maximizer $\hat Q_N$ exists with $Q$-probability approaching $1$ and $\hat\theta_N=\theta_{\hat Q_N} \to_Q \theta_Q$).
\end{enumerate}
\end{assumption}
These assumptions can be justified using weak assumptions on the family considered, see for example Berk \cite[Theorem 3.1]{Berk1972}, Silvey \cite{Silvey1959} or the more general discussion of Lehmann \cite{Lehmann1999} (see also \cite{Lehmann1998}). For more advanced discussion of the theory of likelihood in exponential families, see Barndorff-Nielsen \cite{Barndorff-Nielsen1978}.

Observe that, whenever the $\mathcal{Q}$-MLE $\hat \theta$ exists, the divergence is given by 
\[\alpha_{\mathcal{Q}|\mathbf{x}_N}(\theta) = -\sum_{n=1}^N \langle \theta-\hat \theta_N, T(X_i)\rangle  + N\big(A(\theta)-A(\hat \theta_N)\big),\]
using the natural abuse of notation $\alpha_{\mathcal{Q}|\mathbf{x}_N}(\theta) := \alpha_{\mathcal{Q}|\mathbf{x}_N}(Q^\theta)$. Given a first order condition will hold at the MLE, we can simplify to remove dependence on the observations (except through the MLE)
\[\alpha_{\mathcal{Q}|\mathbf{x}_N}(\theta) = N\big(A(\theta)-A(\hat \theta_N)-\langle \theta-\hat\theta_N, \partial A(\hat \theta_N)\rangle\big).\]

The following result will allow us to get a tight asymptotic approximation of the penalty, as it will allow us to focus our attention on a small ball around the MLE.
\begin{lemma}\label{lem:unifBound}
Let $\rho>0$ be a constant and let $\hat \theta_N$ denote the MLE of $\theta$. Then, for each $P\in\mathcal{Q}$, there exist constants $c_1, c_2$ independent of $N$ such that, writing 
\[R=  \frac{c_1\rho}{N} \vee \sqrt{\frac{c_2 \rho}{N}} = O(N^{-1/2})\]
we have that 
\[P\big(\alpha_{\mathcal{Q}|\mathbf{x}}(\theta)>  \rho \text{ for all }\theta:\|\theta - \hat \theta\|> R\big)\to 1.\]
In other words, with high probability, we know $\alpha_{\mathcal{Q}|\mathbf{x}}>\rho$ whenever $\|\theta - \hat \theta\|> R=O(N^{-1/2})$.
\end{lemma}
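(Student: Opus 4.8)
The plan is to recognise the penalty as $N$ times a Bregman divergence of the log-partition function and then to control how quickly this divergence grows away from the MLE. Writing
\[
B(\theta,\hat\theta_N) = A(\theta) - A(\hat\theta_N) - \langle \theta - \hat\theta_N, \partial A(\hat\theta_N)\rangle,
\]
the identity $\alpha_{\mathcal{Q}|\mathbf{x}_N}(\theta) = N\,B(\theta,\hat\theta_N)$ recorded just above shows that $\alpha_{\mathcal{Q}|\mathbf{x}_N}(\theta) > \rho$ is equivalent to $B(\theta,\hat\theta_N) > \rho/N$. Since $A$ is convex and smooth, $\theta \mapsto B(\theta,\hat\theta_N)$ is itself convex, nonnegative, and vanishes only at $\hat\theta_N$; in particular its sublevel sets are convex and contain $\hat\theta_N$. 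The idea is to bound $B$ below by a function that behaves quadratically near $\hat\theta_N$ (using strict positive definiteness of the information matrix, Assumption (i)) and at least linearly far away (using convexity), and then to read off the radius of the sublevel set $\{B\le \rho/N\}$.

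For the deterministic estimate, fix $P\in\mathcal{Q}$ with parameter $\theta_P$. By continuity of $\theta\mapsto\partial^2 A(\theta)$ and its positive definiteness, I would choose a compact neighbourhood $K\subset\Theta$ of $\theta_P$ and constants $m>0$, $t_0>0$ with $\lambda_{\min}(\partial^2 A(\theta))\ge m$ for all $\theta\in K$, arranged so that $K$ contains the $t_0$-neighbourhood of a smaller neighbourhood $U$ of $\theta_P$. On the event $\{\hat\theta_N\in U\}$ the ball of radius $t_0$ about $\hat\theta_N$ lies in $K$, so for any unit vector $u$ the radial map $g(t)=B(\hat\theta_N+tu,\hat\theta_N)$ satisfies $g(0)=g'(0)=0$ and $g''(t)=u^\top\partial^2 A(\hat\theta_N+tu)\,u\ge m$ for $t\le t_0$. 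Integrating gives $g(t)\ge \tfrac{m}{2}t^2$ for $t\le t_0$; since $g$ is convex with $g'(t_0)\ge m t_0$, also $g(t)\ge \tfrac{m t_0}{2}t$ for $t\ge t_0$ (wherever the ray stays in $\Theta$; outside $\Theta$ one has $\alpha_{\mathcal{Q}|\mathbf{x}_N}=+\infty>\rho$ trivially). Thus, uniformly in direction,
\[
B(\theta,\hat\theta_N)\ge \tfrac{m}{2}\|\theta-\hat\theta_N\|^2 \ \ (\|\theta-\hat\theta_N\|\le t_0), \qquad B(\theta,\hat\theta_N)\ge \tfrac{m t_0}{2}\|\theta-\hat\theta_N\| \ \ (\|\theta-\hat\theta_N\|\ge t_0).
\]

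Inverting these bounds, I would set $c_2=2/m$, $c_1=2/(m t_0)$ and $R=\sqrt{c_2\rho/N}\vee c_1\rho/N$. Convexity of the sublevel set does the global work: it suffices to verify $B(\theta,\hat\theta_N)>\rho/N$ on the sphere $\|\theta-\hat\theta_N\|=R$, because a convex sublevel set containing $\hat\theta_N$ cannot extend past a sphere on which the function already exceeds its level. When $\rho/N\le m t_0^2/2$ the active term is $R=\sqrt{2\rho/(mN)}\le t_0$, and the quadratic bound gives $B\ge\rho/N$ on that sphere; otherwise $R=2\rho/(m t_0 N)>t_0$ and the linear bound does the same. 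Either way $B>\rho/N$ outside the ball of radius $R$, hence $\alpha_{\mathcal{Q}|\mathbf{x}_N}(\theta)>\rho$ there, and $R=O(N^{-1/2})$ since the square-root term dominates for large $N$. Finally, consistency of the $\mathcal{Q}$-MLE (Assumption (ii)) gives $P(\hat\theta_N\in U)\to 1$, which upgrades the deterministic bound on $\{\hat\theta_N\in U\}$ to the stated probabilistic conclusion. The main obstacle is precisely ensuring that the curvature constants $m,t_0$ (and hence $c_1,c_2$) are independent of both $N$ and the random $\hat\theta_N$; this is exactly what confining $\hat\theta_N$ to the fixed neighbourhood $U\subset K$ with high probability delivers, since then a single uniform curvature bound governs the whole relevant region.
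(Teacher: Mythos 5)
Your proof is correct and follows essentially the same route as the paper's: a quadratic lower bound on $\alpha_{\mathcal{Q}|\mathbf{x}_N}$ on a fixed ball around $\hat\theta_N$ coming from positive-definiteness of the information matrix, extended to a linear lower bound outside that ball by convexity, with consistency of the MLE converting the deterministic containment of the sublevel set into the high-probability statement. The only difference is technical: where the paper uses a third-order Taylor expansion with a bounded third derivative (absorbing the cubic error by shrinking the radius), you integrate a uniform Hessian lower bound $\lambda_{\min}(\partial^2 A)\geq m$ along rays, which is marginally cleaner and requires one less derivative of $A$.
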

\begin{proof}
 See Appendix.
\end{proof}

\begin{remark}\label{rem:unifBoundapply}
The previous result will mainly be used to show that, when we consider bounded random variables, for any $P\in\mathcal{Q}$ we can approximate the divergence by
\[\alpha_{\mathcal{Q}|\mathbf{x}_N}(\theta) = \frac{N}{2}(\theta-\hat \theta_N)^\top \Big[\mathfrak{I}_{\hat\theta_N}+O_P(N^{-1/2})\Big](\theta-\hat \theta_N).\]
This is itself an interesting and useful result, particularly when we use the DR-expectation approach as a first step in a larger problem. For example, when we use a DR-expectation to capture the uncertainty in calibration of a model, which we then wish to use in a variety of settings this result shows that it is enough (to first order) to penalize using the observed information matrix, rather than repeatedly calculating the likelihood function. This is the approximation we made in \eqref{eq:coinpenalty}. 

As the approximation is a quadratic, the optimization needed to calculate $\CE_{\mathcal{Q}|\mathbf{x}_N}^{k,\gamma}$ is straightforward (particularly for linear or quadratic functionals of the parameters),  which can have significant numerical advantages (see for example Ben-Tal and Nemirovski \cite{Ben-Tal1998}).
\end{remark}
We now use this approximation to give asymptotic estimates for the DR-expectation. This can be seen as an analogue to the central limit theorem (cf. Example \ref{exampleNormal}). Note that, unlike in the nonparametric case, we do not need to scale the risk aversion parameter $k$ as $N\to \infty$. It is convenient to make the following definition.

\begin{definition}
Let $\phi$ be a bounded function such that the map $\tilde\phi: \theta\mapsto E_\theta[\phi(X)]$ is differentiable. We write 
\[V(\phi, \hat\theta) := (\partial \tilde\phi|_{\hat \theta})^\top (\mathfrak{I}_{\hat\theta}^{-1})(\partial \tilde\phi|_{\hat \theta}).\]
\end{definition}
\begin{remark}
Observe that, by classical arguments, if $\phi$ can be written as a linear function of the sufficient statistics then 
\[V(\phi,\hat\theta) = \mathrm{Var}_{\hat \theta}(\phi(X)).\]
If $\hat\theta_N$ has the variance appearing in the central limit theorem, that is\footnote{See Lehmann \cite[Section 7.7]{Lehmann1999} for one set of sufficient conditions under which this holds.},   $\mathrm{Var}(\hat\theta_N) \approx N^{-1}\mathfrak{I}_{\theta_P}^{-1}$,  then (given an appropriate array of integrability and continuity assumptions) we have the approximate variance of the MLE-expectation
\[\frac{1}{N}V(\phi, \hat\theta)\approx \mathrm{Var}_P(E_{\hat\theta_N}[\phi(X)]).\]
\end{remark}

\begin{theorem}
 Let $\phi$ be a bounded function such that the map $\tilde\phi:\theta \mapsto E_{Q^\theta}[\phi(X)]$ is twice differentiable. Then for all $P\in\mathcal{Q}$,
\[\CE^{k,1}_{\mathcal{Q}|\mathbf{x}_N}(\phi(X)) = E_{\hat \theta_N}[\phi(X)] + \frac{k}{2N} V(\phi, \hat\theta_N) + O_P(N^{-3/2}).\]
\end{theorem}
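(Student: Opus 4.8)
The plan is to localize the supremum to a shrinking ball around the MLE, replace the penalty there by its quadratic Taylor approximation, and solve the resulting quadratic program explicitly. Writing $g(\theta) := E_\theta[\phi(X)] - \tfrac1k\alpha_{\mathcal{Q}|\mathbf{x}_N}(\theta)$, so that $\CE^{k,1}_{\mathcal{Q}|\mathbf{x}_N}(\phi(X)) = \sup_{\theta\in\Theta} g(\theta)$, the first step is to use the boundedness of $\phi$ together with Lemma \ref{lem:unifBound} to restrict this supremum to a ball $\{\theta:\|\theta-\hat\theta_N\|\le R\}$ with $R=O_P(N^{-1/2})$. Indeed, $g(\hat\theta_N)=E_{\hat\theta_N}[\phi(X)]$ since $\alpha_{\mathcal{Q}|\mathbf{x}_N}(\hat\theta_N)=0$, while for any $\theta$ with $\alpha_{\mathcal{Q}|\mathbf{x}_N}(\theta)>\rho:=2k\|\phi\|_\infty$ one has
\[g(\theta) < E_\theta[\phi(X)] - 2\|\phi\|_\infty \le \|\phi\|_\infty - 2\|\phi\|_\infty = -\|\phi\|_\infty \le E_{\hat\theta_N}[\phi(X)] = g(\hat\theta_N),\]
so such $\theta$ cannot be maximizers. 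Applying Lemma \ref{lem:unifBound} with this fixed $\rho$ shows that, with probability tending to $1$, the sublevel set $\{\theta:\alpha_{\mathcal{Q}|\mathbf{x}_N}(\theta)\le\rho\}$ lies inside the ball of radius $R=O(N^{-1/2})$, which establishes the reduction.

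On this ball I would use two Taylor expansions. Setting $s:=\theta-\hat\theta_N$, Remark \ref{rem:unifBoundapply} gives
\[\alpha_{\mathcal{Q}|\mathbf{x}_N}(\theta) = \frac{N}{2}\, s^\top\big[\mathfrak{I}_{\hat\theta_N} + O_P(N^{-1/2})\big] s,\]
while twice-differentiability of $\tilde\phi$ gives, with $v:=\partial\tilde\phi|_{\hat\theta_N}$,
\[E_\theta[\phi(X)] = E_{\hat\theta_N}[\phi(X)] + v^\top s + O(\|s\|^2),\]
the implied constants being uniform because consistency of the MLE keeps $\hat\theta_N$ in a fixed compact neighbourhood of $\theta_P$ with high probability, on which the smoothness and positive-definiteness of the standing Assumption hold. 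Substituting into $g$ leaves
\[g(\theta) = E_{\hat\theta_N}[\phi(X)] + v^\top s - \frac{N}{2k}\, s^\top\big[\mathfrak{I}_{\hat\theta_N} + O_P(N^{-1/2})\big] s + O(\|s\|^2).\]

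Maximizing this quadratic yields the stationary point $s^* = \tfrac{k}{N}[\mathfrak{I}_{\hat\theta_N}+O_P(N^{-1/2})]^{-1} v$, of magnitude $O_P(N^{-1})$, which lies strictly inside the ball of radius $R=O_P(N^{-1/2})$; this confirms that the constrained and unconstrained optima coincide. Evaluating at $s^*$ gives optimal value $E_{\hat\theta_N}[\phi(X)] + \tfrac{k}{2N}\,v^\top[\mathfrak{I}_{\hat\theta_N}+O_P(N^{-1/2})]^{-1} v$, and expanding the perturbed inverse against $V(\phi,\hat\theta_N)=v^\top\mathfrak{I}_{\hat\theta_N}^{-1}v$ produces $E_{\hat\theta_N}[\phi(X)] + \tfrac{k}{2N}V(\phi,\hat\theta_N) + O_P(N^{-3/2})$, with the residual $O(\|s^*\|^2)=O_P(N^{-2})$ from the expansion of $\tilde\phi$ absorbed into the $O_P(N^{-3/2})$ term.

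The main obstacle I anticipate is the careful bookkeeping of error orders: specifically, verifying that the $O_P(N^{-1/2})$ relative perturbation of the information matrix, combined with an optimizer of size $O_P(N^{-1})$, produces exactly the claimed $O_P(N^{-3/2})$ rate (a sharper third-order analysis would in fact give $O_P(N^{-2})$, but the uniform bound of Remark \ref{rem:unifBoundapply} is the convenient ingredient and yields the stated order). A secondary point requiring care is the uniformity of the localization — ensuring that $s^*$ lies well inside the ball so the constraint is inactive, and that the Taylor remainder constants do not degrade as $\hat\theta_N$ fluctuates, both of which follow from MLE consistency and the standing Assumption.
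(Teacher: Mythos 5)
Your strategy is the same as the paper's: localize the supremum to an $O_P(N^{-1/2})$ ball around $\hat\theta_N$ using boundedness of $\phi$ and Lemma \ref{lem:unifBound}, Taylor-expand the penalty (Remark \ref{rem:unifBoundapply}) and $\tilde\phi$, and solve the resulting quadratic program. Your localization is in fact slightly more careful than the paper's (the threshold $2k\|\phi\|_\infty$ is the right one, and you explicitly verify that the stationary point is interior to the ball, so the constraint is inactive).

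However, one step does not hold as written: your treatment of the Taylor remainder of $\tilde\phi$. Writing $g=q+e$, where $q$ is the quadratic model and $e(s)=O(\|s\|^2)$ is the remainder, you maximize $q$ alone and then account for $e$ only by evaluating it at the stationary point $s^*$ of $q$, claiming a residual $O(\|s^*\|^2)=O_P(N^{-2})$. This gives the correct lower bound $\sup g\geq q(s^*)+e(s^*)$, but for the matching upper bound you must control $e$ at the \emph{true} maximizer of $g$, which a priori is only known to lie in the ball of radius $O_P(N^{-1/2})$, where $e$ can be as large as $O_P(N^{-1})$ --- larger than the claimed error $O_P(N^{-3/2})$. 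The paper sidesteps this by writing $O(\|s\|^2)=\langle s, O_P(N^{-1/2})\rangle$ on the localized ball, i.e.\ folding the remainder into a perturbed linear coefficient $\partial\tilde\phi|_{\hat\theta_N}+O_P(N^{-1/2})$, so that the localized objective is \emph{exactly} a quadratic with perturbed coefficients, whose supremum is computed in closed form; expanding $\big(v+O_P(N^{-1/2})\big)^\top\big[\mathfrak{I}_{\hat\theta_N}+O_P(N^{-1/2})\big]^{-1}\big(v+O_P(N^{-1/2})\big)$ then yields the stated $O_P(N^{-3/2})$. Equivalently, you could absorb the remainder into the quadratic form (since $\|s\|^2=\tfrac{N}{2k}\,s^\top O(k/N)\,s$ and $k/N=o(N^{-1/2})$), or insert a second localization showing the true maximizer itself lies within $O_P(N^{-1})$ of $\hat\theta_N$ (which follows by comparing $g$ at the maximizer with $g(\hat\theta_N)$ and using the dominance of the penalty's curvature). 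With any of these repairs the rest of your computation is correct and coincides with the paper's.
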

\begin{proof}
Fix $P\in \mathcal{Q}$. For simplicity, we write $\hat \theta$ for $\hat \theta_N$. To begin, observe that 
\[\CE^{k,1}_{\mathcal{Q}|\mathbf{x}_N}(\phi(X)) = \sup_{\theta\in\Theta}\Big\{E_{Q^\theta}[\phi(X)] - \frac{1}{k}\alpha_{\mathcal{Q}|\mathbf{x}_N}(\theta)\Big\}\]
and as $\phi$ is bounded, we need only consider those measures
\[\Theta_N=\Big\{\theta\in\Theta:\alpha_{\mathcal{Q}|\mathbf{x}_N}(\theta)\leq k\sup_x|\phi(x)|\Big\}.\]
From Lemma \ref{lem:unifBound}, we know that 
\[P\Big(\sup_{\theta\in\Theta_N} \|\theta-\hat\theta\|>O(N^{-1/2})\Big) \to 0.\]

We know $\hat \theta\to_P \theta_P$ and $\tilde\phi$ is twice differentiable at $\theta_P$, so for $\theta\in\Theta_N$,
\[\begin{split}
   E_{Q^\theta}[\phi(X)] &= \tilde\phi(\hat\theta) + \Big\langle \theta - \hat\theta, \partial \tilde\phi|_{\hat\theta}+ O_P(\|\theta-\hat\theta\|)\Big\rangle \\
&= \tilde\phi(\hat\theta) + \Big\langle \theta - \hat\theta, \partial \tilde\phi|_{\hat\theta}+ O_P(N^{-1/2})\Big\rangle 
  \end{split}
\]
We also know that $\alpha_{\mathcal{Q}|\mathbf{x}_N}(\theta)$ is smooth, convex and minimized at $\hat\theta$, so for $\theta\in\Theta_N$,
\[\begin{split}
   \alpha_{\mathcal{Q}|\mathbf{x}_N}(\theta)&= \frac{N}{2}(\theta-\hat\theta)^\top \Big[\mathfrak{I}_{\hat\theta}+O_P(\|\theta-\hat\theta\|)\Big](\theta-\hat\theta)\\
&= \frac{N}{2}(\theta-\hat\theta)^\top \Big[\mathfrak{I}_{\hat\theta}+O_P(N^{-1/2})\Big](\theta-\hat\theta).
  \end{split}
\]
Substituting these, we have the approximate DR-expectation
\[\begin{split}
 & \CE^{k,1}_{\mathcal{Q}|\mathbf{x}_N}(\phi(X))\\
& = \tilde\phi(\hat\theta) + \sup_{\theta\in\Theta_N}\Big\{\Big\langle \theta - \hat\theta, \partial \tilde\phi|_{\hat\theta}+ O_P(N^{-1/2})\Big\rangle \\
&\qquad\qquad\qquad - \frac{N}{2k}(\theta-\hat\theta)^\top \Big[\mathfrak{I}_{\hat\theta}+O_P(N^{-1/2})\Big](\theta-\hat\theta)\Big\}.
  \end{split}
\]
The term in braces has optimizer 
\[\theta^* = \hat\theta + \frac{k}{N} \Big(\mathfrak{I}_{\hat\theta}+O_P(N^{-1/2})\Big)^{-1}\Big(\partial \tilde\theta|_{\hat \theta}+O_P(N^{-1/2})\Big),\]
where we know that, as $\hat\theta\to \theta_P$ and $\mathfrak{I}_{\theta^P}$ is positive definite, with $P$-probability approaching $1$ the matrix $\mathfrak{I}_{\hat\theta}+O_P(N^{-1/2})$ is nonsingular. Substituting, we have the desired approximation
\[\CE^{k,1}_{\mathcal{Q}|\mathbf{x}_N}(\phi(X)) = \tilde\phi(\hat\theta) + \frac{k}{2N} (\partial \tilde\phi|_{\hat \theta})^\top (\mathfrak{I}_{\hat\theta}^{-1})(\partial \tilde\phi|_{\hat \theta}) + O_P(N^{-3/2}).\]
\end{proof}

We now consider the case $\gamma=\infty$.

\begin{theorem}
 Let $\phi$ be a bounded function such that the map $\tilde\phi:\theta \mapsto E_{Q^\theta}[\phi(X)]$ is twice differentiable. Then for all $P\in\mathcal{Q}$,
\[\CE^{k,\infty}_{\mathcal{Q}|\mathbf{x}_N}(\phi(X)) = E_{\hat \theta_N}[\phi(X)] + \sqrt{\frac{2k}{N} V(\phi, \hat \theta_N)} + O_P(N^{-3/4}).\]
\end{theorem}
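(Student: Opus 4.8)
The plan is to use the $\gamma=\infty$ convention to rewrite the DR-expectation as a \emph{constrained} optimisation,
\[\CE^{k,\infty}_{\mathcal{Q}|\mathbf{x}_N}(\phi(X)) = \sup_{\theta:\,\alpha_{\mathcal{Q}|\mathbf{x}_N}(\theta)\le k} E_{Q^\theta}[\phi(X)],\]
and then, exactly as in the proof of the $\gamma=1$ theorem, to reduce this to a local problem on a shrinking neighbourhood of the MLE. Applying Lemma \ref{lem:unifBound} with $\rho = k\sup_x|\phi(x)|$, with $P$-probability tending to $1$ every feasible $\theta$ satisfies $\|\theta-\hat\theta_N\| = O_P(N^{-1/2})$, so I may replace $\Theta$ by the ball of radius $R=O(N^{-1/2})$ about $\hat\theta_N$. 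On this ball I substitute the two expansions already in hand: the first-order expansion $E_{Q^\theta}[\phi(X)] = \tilde\phi(\hat\theta_N) + \langle\theta-\hat\theta_N,\partial\tilde\phi|_{\hat\theta_N}\rangle + O_P(\|\theta-\hat\theta_N\|^2)$ from twice-differentiability of $\tilde\phi$, and the quadratic expansion of the divergence from Remark \ref{rem:unifBoundapply}, namely $\alpha_{\mathcal{Q}|\mathbf{x}_N}(\theta) = \tfrac N2(\theta-\hat\theta_N)^\top[\mathfrak{I}_{\hat\theta_N}+O_P(N^{-1/2})](\theta-\hat\theta_N)$.

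Writing $v=\theta-\hat\theta_N$ and $g=\partial\tilde\phi|_{\hat\theta_N}$, the leading problem is the maximisation of the linear functional $\langle v,g\rangle$ over the ellipsoid $\{v:\tfrac N2 v^\top\mathfrak{I}_{\hat\theta_N}v\le k\}$, which has the classical closed-form value
\[\sup_{v:\,\frac N2 v^\top\mathfrak{I}_{\hat\theta_N}v\le k}\langle v,g\rangle = \sqrt{\frac{2k}{N}\,g^\top\mathfrak{I}_{\hat\theta_N}^{-1}g} = \sqrt{\frac{2k}{N}V(\phi,\hat\theta_N)},\]
attained at $v^* = \sqrt{2k/(N\,g^\top\mathfrak{I}_{\hat\theta_N}^{-1}g)}\,\mathfrak{I}_{\hat\theta_N}^{-1}g$, which sits \emph{on the boundary} at distance $\|v^*\|=O_P(N^{-1/2})$. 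Adding back $\tilde\phi(\hat\theta_N)=E_{\hat\theta_N}[\phi(X)]$ yields the two claimed leading terms; as a sanity check, for a Gaussian family $\mathfrak{I}$ is constant and the cubic correction vanishes, recovering Example \ref{exampleNormal} exactly.

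The real work, and the main obstacle, is to show the two approximations perturb the \emph{value} only at the stated order. This is more delicate than the $\gamma=1$ case: there the interior maximiser lies at distance $O_P(N^{-1})$ from $\hat\theta_N$, so every remainder is negligible, whereas here the maximiser lies on the constraint boundary at the far larger distance $O_P(N^{-1/2})$, making the relative size of the cubic remainder in $\alpha$ and of the second-order term of $\tilde\phi$ only $O_P(N^{-1/2})$. I would tame the non-ellipsoidal feasible set by a sandwich: bounding the cubic remainder uniformly on the feasible ball by some $\eta=O_P(N^{-1/2})$ places the true feasible set between the ellipsoids $\{\tfrac N2 v^\top\mathfrak{I}_{\hat\theta_N}v\le k-\eta\}$ and $\{\tfrac N2 v^\top\mathfrak{I}_{\hat\theta_N}v\le k+\eta\}$; maximising $\langle v,g\rangle$ over each and using $\sqrt{k\pm\eta}=\sqrt{k}+O_P(N^{-1/2})$ pins $\sup\langle v,g\rangle$ down well within the claimed $O_P(N^{-3/4})$, while the discarded quadratic term $\tfrac12 v^\top(\partial^2\tilde\phi|_{\hat\theta_N})v = O_P(\|v\|^2) = O_P(N^{-1})$ of the objective is uniformly controlled on the ball. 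The care needed is precisely that the boundary optimiser may shift appreciably under these perturbations even though its \emph{value} does not, so one must argue at the level of the optimal value (via the sandwich) rather than by substituting an approximate optimiser. Finally, I would remove the conditioning on the high-probability event of Lemma \ref{lem:unifBound}, on whose complement the contribution is $o_P$ of any polynomial rate since $\phi$ is bounded, to obtain the in-probability statement.
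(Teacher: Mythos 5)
Your proposal is correct and is essentially the paper's own proof: the same rewriting of $\CE^{k,\infty}_{\mathcal{Q}|\mathbf{x}_N}(\phi(X))$ as a maximisation of $E_{Q^\theta}[\phi(X)]$ over the likelihood region $\{\theta:\alpha_{\mathcal{Q}|\mathbf{x}_N}(\theta)\le k\}$, the same localisation to an $O_P(N^{-1/2})$ ball around $\hat\theta_N$ via Lemma \ref{lem:unifBound}, the same first-order expansion of $\tilde\phi$ and quadratic expansion of the divergence, and the same closed-form maximisation of a linear functional over an ellipsoid yielding $\sqrt{(2k/N)V(\phi,\hat\theta_N)}$. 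The only divergence is in the remainder bookkeeping: the paper substitutes $\partial\tilde\phi|_{\hat\theta}+O_P(N^{-1/2})$ and $\mathfrak{I}_{\hat\theta}+O_P(N^{-1/2})$ directly into the closed form (its $O_P(N^{-3/4})$ absorbs the worst case, where $(\partial\tilde\phi|_{\hat\theta})^\top\mathfrak{I}_{\hat\theta}^{-1}(\partial\tilde\phi|_{\hat\theta})$ may be near zero and the square root halves the rate), whereas your value-level sandwich over the constraint levels $k\pm\eta$ is a correct --- and in fact slightly sharper, $O_P(N^{-1})$ --- treatment of the same remainders; the one small slip is the choice $\rho=k\sup_x|\phi(x)|$ when invoking Lemma \ref{lem:unifBound}, which covers the feasible set $\{\alpha_{\mathcal{Q}|\mathbf{x}_N}\le k\}$ only when $\sup_x|\phi(x)|\ge 1$, so you should take $\rho=k$ instead.
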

\begin{proof}
 The proof follows much in the same way as the case $\gamma=1$ and we use the same notation. We know that 
\[\CE^{k,\infty}_{\mathcal{Q}|\mathbf{x}_N}(\phi(X)) = E_{\hat\theta}[\phi(X)] + \sup_{\theta: \alpha_{\mathcal{Q}|\mathbf{x}_N}(\theta)\leq k}\Big\langle \theta - \hat\theta, \partial \tilde\phi|_{\hat\theta}+ O_P(\|\theta-\hat\theta\|)\Big\rangle.\]
We see that 
\[\alpha_{\mathcal{Q}|\mathbf{x}_N}(\theta)= \frac{N}{2}(\theta-\hat\theta)^\top \Big[\mathfrak{I}_{\hat\theta}+O_P(\|\theta-\hat\theta\|)\Big](\theta-\hat\theta)\]
and from Lemma \ref{lem:unifBound}, with probability approaching $1$, it is enough to consider to $\Theta_N=\{\theta: \|\theta-\hat\theta\|<O_P(N^{-1/2})\}$. Standard optimization then yields
\[\begin{split}
&\CE^{k,\infty}_{\mathcal{Q}|\mathbf{x}}(\phi(X)) -E_{\hat\theta}[\phi(X)]\\
    &= \sqrt{\frac{k}{2N}}\Big((\partial \tilde\phi|_{\hat\theta}+ O_P(N^{-1/2}))^{\top}\Big[\mathfrak{I}_{\hat\theta}+O_P(N^{-1/2})\Big]^{-1}(\partial \tilde\phi|_{\hat\theta}+ O_P(N^{-1/2}))\Big)^{1/2}\\
    &= \sqrt{\frac{k}{2N}}\Big((\partial \tilde\phi|_{\hat\theta})^{\top}\mathfrak{I}_{\hat\theta}^{-1}(\partial \tilde\phi|_{\hat\theta})\Big)^{1/2} + O_P(N^{-3/4}).
  \end{split}\]
The result follows.
\end{proof}

\begin{remark}
The cases $\gamma \in (1,\infty)$ can also be treated using the approximation implied by Lemma \ref{lem:unifBound} (in the way suggested by Remark \ref{rem:unifBoundapply}), and are left as a tedious exercise for the reader.
\end{remark}

The following result is can be shown to hold under the assumption that $\mathcal{Q}$ is of the exponential family we consider here, or more generally. It is particularly of interest as it is naturally a `uniform' result over the space of outcomes (which do not need to be bounded or independent of the observations). This is of importance in decision marking, as we will often wish to choose between a range of outcomes $\xi$, and wish to be confident that our comparison method is valid for all choices simultaneously.

\begin{theorem}\label{thm:Wilksresult}
Suppose the MLE is consistent and Wilks' theorem holds under every $P\in\mathcal{Q}$ (that is, $\frac{1}{2}\alpha_{\mathcal{Q}|\mathbf{x}_N}(P)$ is asymptotically $\chi^2_d$ distributed under $P$, where $d$ is a known parameter). Then, for a random variable $\xi$,
\[\mathcal{I}_N(\xi)=\Big[-\CE^{k,\infty}_{\mathcal{Q}|\mathbf{x}_N}(-\xi),\quad \CE^{k,\infty}_{\mathcal{Q}|\mathbf{x}_N}(\xi)\Big]\]
is a likelihood interval for $E[\xi]$, with the uniform asymptotic property 
\[\lim_N P\Big(E_P[\xi|\mathbf{x}_N]\in \mathcal{I}_N \text{ for all }\xi\Big)\geq F_{\chi^2_{d}}(2k).\]
\end{theorem}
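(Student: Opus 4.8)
The plan is to observe that the entire ``uniform over all $\xi$'' statement collapses to a single, much simpler event: that the true measure $P$ itself lies in the likelihood region cut out by the threshold $k$. Recall that with the convention $x^\infty=0$ for $x\in[0,1]$, the $\gamma=\infty$ expectation is $\CE^{k,\infty}_{\mathcal{Q}|\mathbf{x}_N}(\xi)=\sup\{E_Q[\xi|\mathbf{x}_N]:Q\in R_N\}$, where $R_N:=\{Q\in\mathcal{Q}:\alpha_{\mathcal{Q}|\mathbf{x}_N}(Q)\leq k\}$ is the set of models whose likelihood is at least $e^{-k}$ times the maximum. Correspondingly the lower endpoint is $-\CE^{k,\infty}_{\mathcal{Q}|\mathbf{x}_N}(-\xi)=\inf\{E_Q[\xi|\mathbf{x}_N]:Q\in R_N\}$, so $\mathcal{I}_N(\xi)$ is exactly the range of the conditional expectations $E_Q[\xi|\mathbf{x}_N]$ as $Q$ varies over $R_N$.

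First I would establish the containment $\{P\in R_N\}\subseteq\{E_P[\xi|\mathbf{x}_N]\in\mathcal{I}_N(\xi)\text{ for all }\xi\}$. This is immediate: on the event $\alpha_{\mathcal{Q}|\mathbf{x}_N}(P)\leq k$, the measure $P$ is itself one of the competitors in both the supremum and the infimum defining the endpoints of $\mathcal{I}_N(\xi)$, so that $\inf_{Q\in R_N}E_Q[\xi|\mathbf{x}_N]\leq E_P[\xi|\mathbf{x}_N]\leq\sup_{Q\in R_N}E_Q[\xi|\mathbf{x}_N]$ for every $\xi$ at once. The crucial point is that $R_N$ does not depend on $\xi$, so a single event certifies coverage simultaneously for the whole family of outcomes; this is precisely the source of the uniformity claimed in the theorem.

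Then I would quantify $P(P\in R_N)=P(\alpha_{\mathcal{Q}|\mathbf{x}_N}(P)\leq k)$ using Wilks' theorem. Since the likelihood-ratio statistic $2\alpha_{\mathcal{Q}|\mathbf{x}_N}(P)$ is asymptotically $\chi^2_d$ under $P$, and the $\chi^2_d$ distribution function is continuous, translating the threshold gives $P(\alpha_{\mathcal{Q}|\mathbf{x}_N}(P)\leq k)=P(2\alpha_{\mathcal{Q}|\mathbf{x}_N}(P)\leq 2k)\to F_{\chi^2_d}(2k)$. Combining this with the containment of the previous step yields $\liminf_N P(\text{coverage})\geq F_{\chi^2_d}(2k)$, which is the asserted bound. (This also matches the calibration noted after Example \ref{exampleNormal}, where $d=1$ and $k\approx2$ recover the classical $95\%$ interval.)

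The main obstacle, and the only genuinely delicate point, is measurability: the event $\{E_P[\xi|\mathbf{x}_N]\in\mathcal{I}_N(\xi)\text{ for all }\xi\}$ is an uncountable intersection and need not a priori be $\F_N$-measurable. Because only a lower bound is claimed, I would sidestep this by passing to inner probability: the coverage event contains the manifestly measurable event $\{P\in R_N\}$, whose probability converges to $F_{\chi^2_d}(2k)$, so the inner probability of the coverage event is bounded below by the same quantity. This is also exactly why the conclusion is an inequality rather than an equality, since the coverage event can be strictly larger than $\{P\in R_N\}$ (for instance when the expectation functional of $P$ happens to lie in the range generated by $R_N$ even though $P\notin R_N$, which can occur because likelihood regions need not be convex), so no matching upper bound follows from this argument.
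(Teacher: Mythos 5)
Your proposal is correct and follows essentially the same route as the paper's proof: the single event $\{\alpha_{\mathcal{Q}|\mathbf{x}_N}(P)\leq k\}$ certifies coverage simultaneously for all $\xi$ (since $P$ is then a competitor in both the supremum and infimum defining the endpoints of $\mathcal{I}_N(\xi)$), and Wilks' theorem sends the probability of that event to $F_{\chi^2_d}(2k)$. Your additional attention to measurability of the uncountable-intersection coverage event, handled via inner probability, is a careful refinement the paper passes over silently, but it does not change the structure of the argument.
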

\begin{proof}
That $\mathcal{I}_N(\xi)$ corresponds to a likelihood interval is trivial, as $\gamma=\infty$ implies we are considering expectations under those measures where the log likelihood (relative to the MLE) is at least $k$. Wilks' theorem then determines the asymptotic behaviour of the relative log likelihood, in particular, we know
\[P\big(\alpha_{\mathcal{Q}|\mathbf{x}_N}(P) \leq k\big) \to F_{\chi^2_d}(2k)\qquad \text{for all }P\in\mathcal{Q},\]
where $F_{\chi^2_d}$ is the cdf of the $\chi^2_d$-distribution. Clearly $\alpha_{\mathcal{Q}|\mathbf{x}_N}(P) \leq k$ implies $E_P[\xi|\mathbf{x}_N] \in \mathcal{I}_N(\xi)$ for all $\xi$. We then obtain the desired result,
\[P\Big(E_P[\xi|\mathbf{x}_N] \in \mathcal{I}_N\text{ for all }\xi\Big) \geq P\big(\alpha_{\mathcal{Q}|\mathbf{x}_N}(P) \leq k\big) \to F_{\chi^2_d}(2k).\]
\end{proof}

\begin{remark}
 Conditions for Wilks' theorem are closely related to those for the central limit theorem, and are typically based on integrability and continuity assumptions on the densities. The result is that,  
\[\frac{1}{2}\alpha_{\mathcal{Q}|\mathbf{x}}(P) \to_{P\text{-}\mathrm{Dist}} \chi^2_d\]
where $d$ is the dimension of the parameter space and ${P\text{-}\mathrm{Dist}}$ refers to convergence, in distribution, under $P$. See Lehmann \cite[Section 7.7]{Lehmann1999} for details.
\end{remark}

\section{Robustness and models}\label{sec:robust}
In this section, we shall consider the behaviour of the divergence-robust expectation for \emph{unbounded} random variables, and its relationship with `robust' statistical estimates. We shall regard the sample size $N$ as fixed. The following theorem complements our earlier asymptotic results (which were generally for bounded outcomes), to demonstrate that without any parametric structure most unbounded random variables do not have finite DR-expectations.

\begin{theorem}
Let $\mathcal{Q}$ be a family of measures such that $\mathcal{Q}$ is closed under taking finite mixtures (i.e. finite convex combinations of measures). Then for any random variable $\xi$ such that $\sup_{Q\in\mathcal{Q}}\{E_Q[\xi]\}=\infty$, for any $\gamma\in[1,\infty]$,
\[\CE_{\mathcal{Q}|\mathbf{x}}^{k, \gamma}(\xi) = \infty.\]
\end{theorem}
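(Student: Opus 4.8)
The plan is to exploit the hypothesis that $\mathcal{Q}$ is closed under finite mixtures together with the fact that the likelihood $L(\cdot\mid\mathbf{x})$ is \emph{affine} along mixtures: the density of $\mathbf{x}$ under a convex combination of models is the corresponding convex combination of densities. The intuition is that one takes a model with enormous $E_Q[\xi]$ and dilutes it into a maximum-likelihood model; this barely decreases the expectation (a fixed fraction of a diverging quantity still diverges) but keeps the likelihood bounded below, and hence the penalty bounded above. Crucially, a single choice of mixing weight will control the penalty term for \emph{all} $\gamma\in[1,\infty]$ at once.

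First I would fix a maximum-likelihood measure $\hat Q\in\mathcal{Q}$, so that $\alpha_{\mathcal{Q}|\mathbf{x}}(\hat Q)=0$, and record that under the standing convention making $\CE_{\mathcal{Q}|\mathbf{x}}^{k,\gamma}(\xi)$ well defined we have $E_{\hat Q}[\xi]>-\infty$. Since $\sup_{Q\in\mathcal{Q}}E_Q[\xi]=\infty$, choose $Q_n\in\mathcal{Q}$ with $E_{Q_n}[\xi]\geq n$. Set $\epsilon=1-e^{-k}$ and define the mixtures $\tilde Q_n=(1-\epsilon)\hat Q+\epsilon Q_n$, which lie in $\mathcal{Q}$ precisely because $\mathcal{Q}$ is closed under finite mixtures.

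The two key estimates are then immediate. By affineness of the likelihood, $L(\tilde Q_n\mid\mathbf{x})=(1-\epsilon)L(\hat Q\mid\mathbf{x})+\epsilon L(Q_n\mid\mathbf{x})\geq(1-\epsilon)L(\hat Q\mid\mathbf{x})$, so that
\[\alpha_{\mathcal{Q}|\mathbf{x}}(\tilde Q_n)\;\leq\;\alpha_{\mathcal{Q}|\mathbf{x}}(\hat Q)-\log(1-\epsilon)\;=\;k,\]
whence $k^{-1}\alpha_{\mathcal{Q}|\mathbf{x}}(\tilde Q_n)\in[0,1]$ and therefore $\big(k^{-1}\alpha_{\mathcal{Q}|\mathbf{x}}(\tilde Q_n)\big)^\gamma\leq 1$ for every $\gamma\in[1,\infty]$ (using the convention $x^\infty=0$ for $x\in[0,1]$). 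Meanwhile linearity of expectation gives $E_{\tilde Q_n}[\xi]=(1-\epsilon)E_{\hat Q}[\xi]+\epsilon E_{Q_n}[\xi]\to\infty$, since $E_{\hat Q}[\xi]>-\infty$ and $E_{Q_n}[\xi]\to\infty$. Feeding $\tilde Q_n$ into the supremum defining the DR-expectation yields, for every $\gamma$,
\[\CE_{\mathcal{Q}|\mathbf{x}}^{k,\gamma}(\xi)\;\geq\;E_{\tilde Q_n}[\xi]-\Big(\tfrac{1}{k}\alpha_{\mathcal{Q}|\mathbf{x}}(\tilde Q_n)\Big)^\gamma\;\geq\;E_{\tilde Q_n}[\xi]-1\;\longrightarrow\;\infty,\]
so $\CE_{\mathcal{Q}|\mathbf{x}}^{k,\gamma}(\xi)=\infty$ for all $\gamma\in[1,\infty]$ simultaneously.

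The only delicate point, and the place where the hypotheses really bite, is the control of the penalty: it rests entirely on the likelihood being affine under mixing, which forces us to keep the diluted model inside $\mathcal{Q}$ — exactly what closure under finite mixtures guarantees. I would also flag the degenerate possibility $E_{\hat Q}[\xi]=-\infty$, which would spoil the reference term $(1-\epsilon)E_{\hat Q}[\xi]$; this is excluded by the convention under which $\CE$ is well defined (every finite-penalty measure satisfies $E_Q[\xi]>-\infty$). If one prefers not to invoke that convention, it suffices to require that \emph{some} positive-likelihood measure in $\mathcal{Q}$ has $E_Q[\xi]>-\infty$ and to use that measure, rather than $\hat Q$, as the reference; the estimate on $\alpha_{\mathcal{Q}|\mathbf{x}}$ then acquires an additional fixed constant $\alpha_{\mathcal{Q}|\mathbf{x}}(Q^\ast)$ but the argument is otherwise unchanged.
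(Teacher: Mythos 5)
Your proposal is correct and is essentially the paper's own argument: both proofs dilute a measure of enormous expectation into a (near-)maximum-likelihood measure, using the affineness of the likelihood under mixing to bound the penalty by $-\log(1-\epsilon)$ plus a constant, and linearity of expectation to make the value diverge. The only cosmetic difference is that you fix the mixing weight $\epsilon=1-e^{-k}$ and let the mixed-in expectations $E_{Q_n}[\xi]\to\infty$, whereas the paper sends $\epsilon\to 0$ while requiring $E_{Q^\epsilon}[\xi]>\epsilon^{-2}$; your choice has the slight advantage of keeping the penalty below $k$ uniformly, so the case $\gamma=\infty$ (where the penalty jumps to $+\infty$ once $\alpha>k$) is handled at no extra cost, a point the paper's proof glosses over by leaving the reference measure $P$ arbitrary.
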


\begin{proof}
 For any $\epsilon>0$, let $Q^\epsilon\in \mathcal{Q}$ be a measure such that $E_{Q^\epsilon}[\xi]>\epsilon^{-2}$. For any measure $P\in \mathcal{Q}$, we define the mixture distribution $P(\epsilon) = (1-\epsilon)P+\epsilon Q^{\epsilon}$. It follows that $P(\epsilon)\in \mathcal{Q}$ and, provided $E_{P}[\xi]>-\infty$,
\[E_{P(\epsilon)}[\xi] = (1-\epsilon)E_{P}[\xi]+\epsilon E_{Q^\epsilon}[\xi] \geq (1-\epsilon)E_{P}[\xi]+\epsilon^{-1} \to \infty\]
as $\epsilon\to 0$. Also, we know $L(P(\epsilon)|\mathbf{x}) = (1-\epsilon)L(P|\mathbf{x}) + \epsilon L(Q^\epsilon|\mathbf{x}) > (1-\epsilon)L(P|\mathbf{x})$, so (assuming for notational simplicity that the $\mathcal{Q}$-MLE $\hat Q$ exists)
\[\alpha_{\mathcal{Q}|\mathbf{x}}(P(\epsilon)) = -\log\Big(\frac{L(P(\epsilon)|\mathbf{x})}{L(\hat Q|\mathbf{x})}\Big) < -\log\Big(\frac{(1-\epsilon)L(P|\mathbf{x})}{L(\hat Q|\mathbf{x})}\Big)\to \alpha_{\mathcal{Q}|\mathbf{x}}(P)<\infty.\]
It follows that, as $\epsilon \to 0$,
\[\CE_{\mathcal{Q}|\mathbf{x}}^{k,\gamma}(\xi) \geq (1-\epsilon)E_{P}[\xi]+\epsilon^{-1}-\Big(\frac{1}{k}\log\Big(\frac{(1-\epsilon)L(P|\mathbf{x})}{L(\hat Q|\mathbf{x})}\Big)\Big)^\gamma \to \infty.\]
\end{proof}

\begin{remark}
 The above assumes $\mathcal{Q}$ is closed under finite mixtures of measures. If we assume that $\mathcal{Q}$ is such that $\{X_n\}_{n\in\bN}$ are iid, then this is not the case. However, for $N<\infty$, an almost identical proof holds whenever $\mathcal{Q}$ is associated with a family of densities $f(\cdot;Q)$, and this family of densities is closed under taking finite mixtures. (The only significant change is that we obtain the inequality $L(P(\epsilon)|\mathbf{x}_N) > (1-\epsilon)^NL(P|\mathbf{x})$, where $P(\epsilon)$ corresponds to the measure with a mixture density.)
\end{remark}

This result highlights the importance of parametric structure for estimation of unbounded random variables, in terms of restricting the class of probability measures that can be considered. This restriction can be thought of in terms of restricting the probabilities of very large (positive or negative) values of $\xi$, and hence ensuring enough integrability that finite expectations arise. Without these restrictions, unlikely events (which by their very nature will generally not be seen in the data, so are not penalized) result in unbounded expectations. 

\begin{remark}
While we have not considered the numerical aspects of this problem in great detail, it is often the case that parametric models are also needed to reduce our problem to a finite-dimensional setting, rather than needing to solve optimization problems on the infinite-dimensional space of measures. 
\end{remark}

Given the importance of parametric families, it is then of interest to consider how the `statistical robustness' of the parametric estimation problem interacts with the `robustness' of the expectations considered. Given our use of likelihood theory, there is a natural connection to $M$-estimates, which correspond to estimates obtained by maximizing some function. Before giving general results, we consider a simple setting.

\begin{example}
Consider $X,\{X_n\}_{n=1}^N$ iid observations from a Laplace (or double exponential) distribution with known scale $1$ and unknown mean $\mu$. That is, $X_n$ has density
\[f(x) = \frac{1}{2} \exp(-|x-\mu|).\]
Let $\mathcal{Q}$ denote the corresponding family of measures and write $Q^\mu$ for the measure with mean $\mu$. For simplicity, assume $N$ is odd, so the MLE is uniquely given by $Q^\mathbf{m}$, where $\mathbf{m}$ is the sample median. This is known to be `statistically robust', see Huber and Ronchetti \cite{Huber2009}, as it does not depend on extreme observations, and is therefore unaffected by outliers.

The $\mathcal{Q}|\mathbf{x}_N$-divergence is then given by
\[\alpha_{\mathcal{Q}|\mathbf{x}_N}(Q^\mu) = \sum_{n=1}^N \Big(|X_n-\mu|-|X_n-\mathbf{m}|\Big).\]
For $X$ an iid observation from the same distribution as $X_n$ and $\beta>0$ (the case $\beta<0$ is symmetric) we have
\[\CE_{\mathcal{Q}|\mathbf{x}_N}^{k,1}(\beta X) = \sup_\mu\Big\{\beta\mu - \frac{1}{k}\sum_{n=1}^N \Big(|X_n-\mu|-|X_n-\mathbf{m}|\Big)\Big\}.\]
A first observation which can be drawn is that $\CE_{\mathcal{Q}|\mathbf{x}}^{k,1}(\beta X)$ is generally infinite, unless $\beta< N/k$. To see this, observe that if $\beta>N/k$, then the function to maximize is linear and increasing for $\mu>\max_{n\leq N} X_n$.

Assuming that $\beta<N/k$, the function to maximize is piecewise linear, concave and asymptotically decreasing (for both positive and negative $\mu$), so a finite solution exists. Except at points where $\mu=X_n$ for some $n$, we can differentiate to obtain the equation
\[0\bowtie \beta - \frac{1}{k}\sum_{n=1}^N (I_{\{X_n<\mu\}}- I_{\{X_n>\mu\}})=:\beta-\frac{N}{k}G(\mu)\]
where $\bowtie$ indicates that either the statement is an equality, or that the left and right limits of the right hand side differ in sign (if $X_n=\mu$ for some $n$). As we are looking for the maximal solution, we can generally state that the solution will be
\[\mu^* = \inf\Big\{\mu: \beta-\frac{N}{k} G(\mu) >0\Big\}.\]
We can also write
\[G(\mu) = (1-F(\mu)) - F(\mu-)\]
where $F(y) = \frac{1}{N} \sum_{n=1}^N I_{\{X_n\leq y\}}$ is the empirical cdf of our observations. Assuming $N$ is moderately large, this is well approximated by a continuous increasing function (so all quantiles are uniquely defined), and we will obtain
\[\mu^* \approx F^{-1}\Big(\frac{1}{2} + \frac{\beta k}{2N}\Big).\]
It follows that the optimizing choice of $\mu^*$ is given by the empirical $\frac{1}{2} + \frac{\beta k}{2N}$ quantile.

Introducing this back into our equation for $\CE_{\mathcal{Q}|\mathbf{x}_N}^{k,1}(\beta X)$, we obtain
\[\begin{split}
  \CE_{\mathcal{Q}|\mathbf{x}_N}^{k,1}(\beta X) &= \beta\mu^* - \frac{1}{k}\sum_{n=1}^N \Big(|X_n-\mu^*|-|X_n-\mathbf{m}|\Big)\\
&= \beta\mu^* - (\mu^*-\mathbf{m})\sum_{n=1}^N (I_{\{X_n\leq\mathbf{m}\}} - I_{\{X_n>\mu^*\}})\\&\qquad + \sum_{n=1}^N I_{\{\mathbf{m}<X_n\leq\mu^*\}}\Big(\frac{\mu^*+\mathbf{m}}{2}-X_n\Big)\\
&\approx \beta\mu^*- \Big(\mu^*-\mathbf{m}-\frac{\mu^*+\mathbf{m}}{2}\Big)\frac{k\beta}{2N} - \sum_{n=1}^N I_{\{\mathbf{m}<X_n\leq\mu^*\}}X_n\\
&\approx \Big(1-\frac{k}{4N}\Big) \beta\mu^*+\frac{3k}{4N}\beta\mathbf{m} - \frac{k}{2N}\frac{\sum_{n=1}^N I_{\{\mathbf{m}<X_n\leq\mu^*\}}(\beta X_n)}{\sum_{i} I_{\{\mathbf{m}<X_n\leq\mu^*\}}}.
 \end{split}
\]
We see that the divergence-robust estimate depends on a weighted combination of the median $\beta\mathbf{m}$, an upper quantile $\beta\mu^*$, and the mean value taken between these two bounds\footnote{This estimate can then be compared with the various perturbations of value-at-risk considered by Cont, Deguest and Scandolo \cite{Cont2010}. However, it is important to note that this closed-form is only for the random variables $\beta X$, not for a general random variable.}. Therefore, this quantity can still be robustly estimated, as it still does not depend on the tail behaviour beyond the $\frac{1}{2} + \frac{\beta k}{2N}$ quantile. More formally, the breakdown point of this estimator (the proportion of the data which can be made arbitrarily large without affecting the estimate) is $\frac{1}{2}(1-\beta \frac{k}{N})$. 

It is easy to see (as $E_{Q^\mu}[\beta X^2] =\beta\mu^2 + 2\beta$) that
\[\CE_{\mathcal{Q}|\mathbf{x}}^{k,1}(\beta X^2) =\infty\]
 for all $N,k, \beta>0$. For negative $\beta$, a finite answer can be obtained, but even its approximate closed-form representation is inelegant.
\end{example}

Comparing this example with the normal example (Example \ref{exampleNormal}), we can see that, when considering a likelihood model, there is a delicate relationship between the `statistical' robustness in the classical estimation problem and the `parameter uncertainty' robustness embedded in $\CE_{\mathcal{Q}|\mathbf{x}}^{k,1}$. The following theorem makes this behaviour more precise.

\begin{theorem}\label{thm:robustequiv}
Consider a sequence of iid random variables $X, \{X_n\}_{n=1}^N$, and a family of measures $\mathcal{Q}$ describing an uncertain `location parameter'. In other words, under $Q\in \mathcal{Q}$, suppose $X,\{X_n\}_{n=1}^N$ are iid observations from a distribution with density $\exp(\Psi(x-\mu_Q))$, and so $\mathcal{Q}$ is parameterized by $\mu_Q=E_Q[X]\in \mathbb{R}$. Suppose $\Psi$ has monotone increasing derivative $\psi$ (which may be discontinuous) and 
\[-\infty\leq \lim_{x\to -\infty}\psi(x) <0<\lim_{x\to +\infty} \psi(x)\leq \infty.\]
Note that the MLE parameter (assuming it exists, for simplicity) is given by the solution $\mu$ to $\sum_n \psi(X_n - \mu) = 0$.

The following are then equivalent.
\begin{enumerate}[(i)]
 \item The MLE parameter has a breakdown point above zero (that is, some fraction of the observations can be made arbitrarily large or small without making the MLE arbitrarily large or small),
 \item The MLE parameter is weakly continuous with respect to the empirical cdf of observations, for any empirical cdf where the MLE parameter is uniquely defined,
 \item $\psi$ is bounded,
 \item For any fixed $k,N$, for all $\beta\in \bR$ sufficiently large (in absolute value), $\CE_{\mathcal{Q}|\mathbf{x}_N}^{k,1}(\beta \xi)\not\in \bR$, where $\xi$ is an iid copy of the observations.
\end{enumerate}
\end{theorem}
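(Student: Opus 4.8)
The plan is to make condition (iii), boundedness of $\psi$, the central hub: I would prove (iii)$\Leftrightarrow$(iv) directly from the structure of the DR-expectation, and obtain (i)$\Leftrightarrow$(iii) and (ii)$\Leftrightarrow$(iii) from the classical theory of monotone location $M$-estimators. First I would make the DR-expectation explicit. Since $\mathcal{Q}$ is a location family with $E_{Q^\mu}[X]=\mu$ and MLE $\hat\mu$ solving $\sum_n\psi(X_n-\mu)=0$, the divergence is
\[\alpha_{\mathcal{Q}|\mathbf{x}_N}(Q^\mu)=\sum_{n=1}^N\big(\Psi(X_n-\mu)-\Psi(X_n-\hat\mu)\big),\]
in agreement with the Laplace example (where $\Psi(u)=|u|$). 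Hence, for an iid copy $\xi$,
\[\CE_{\mathcal{Q}|\mathbf{x}_N}^{k,1}(\beta\xi)=\sup_{\mu\in\bR}h(\mu),\qquad h(\mu):=\beta\mu-\frac{1}{k}\sum_{n=1}^N\big(\Psi(X_n-\mu)-\Psi(X_n-\hat\mu)\big).\]
Because $\psi=\Psi'$ is increasing, each $\mu\mapsto\Psi(X_n-\mu)$ is convex, so $h$ is concave and finite on $\bR$; consequently $\sup_\mu h=+\infty$ precisely when $h$ is not coercive, which for a concave function means that one of its two slopes at infinity has the `wrong' sign.

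The heart of the argument is a short computation of those slopes. Differentiating, $h'(\mu)=\beta+\frac{1}{k}\sum_{n=1}^N\psi(X_n-\mu)$, and since $\psi$ is monotone with limits $\psi(\pm\infty)$,
\[\lim_{\mu\to+\infty}h'(\mu)=\beta+\frac{N}{k}\psi(-\infty),\qquad \lim_{\mu\to-\infty}h'(\mu)=\beta+\frac{N}{k}\psi(+\infty).\]
By concavity, $\CE_{\mathcal{Q}|\mathbf{x}_N}^{k,1}(\beta\xi)=+\infty$ iff $\beta+\frac{N}{k}\psi(-\infty)>0$ or $\beta+\frac{N}{k}\psi(+\infty)<0$. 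If $\psi$ is bounded both limits are finite, so large positive $\beta$ triggers the first inequality and large negative $\beta$ the second, giving (iv) (and recovering the threshold $\beta=N/k$ of the Laplace example, where $|\psi(-\infty)|=1$). Conversely, if $\psi$ is unbounded then at least one limit is infinite; if, say, $\psi(+\infty)=+\infty$, the second inequality can never fire for finite $\beta$, so $\CE$ remains finite for all large negative $\beta$ and the two-sided requirement of (iv) fails. This gives (iii)$\Leftrightarrow$(iv).

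For the remaining equivalences I would invoke the standard theory of monotone location $M$-estimators (Huber and Ronchetti \cite{Huber2009}). For (i)$\Leftrightarrow$(iii): in the defining equation $\sum_n\psi(X_n-\mu)=0$, driving one observation to $+\infty$ leaves its contribution $\psi(X_n-\mu)$ bounded iff $\psi$ is bounded above; when $\psi$ is unbounded this contribution can only be balanced by sending $\hat\mu\to\infty$, so a single bad point breaks the estimate (breakdown point $0$), whereas boundedness lets the remaining $N-1$ terms absorb it for $N$ large, giving a strictly positive breakdown point. For (ii)$\Leftrightarrow$(iii) I would use the characterisation of qualitative robustness: the functional $F\mapsto T(F)$ defined by $\int\psi(x-\mu)\,dF(x)=0$ is weakly continuous (wherever uniquely defined) iff $\psi$ is bounded, since an unbounded $\psi$ makes $\int\psi\,dF$ arbitrarily sensitive to vanishingly small mass escaping to infinity.

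I expect the main obstacle to be technical bookkeeping in the (iii)$\Leftrightarrow$(iv) step rather than any conceptual difficulty. One must accommodate the possible discontinuities of $\psi$ (so $h$ is concave but only one-sided differentiable, and the `slopes at infinity' should be read as limits of difference quotients), confirm that $E_{Q^\mu}[\beta\xi]=\beta\mu$ and the convexity of the penalty are unaffected by any offset between the mean parameterisation and the raw location parameterisation, and in particular handle the mixed cases where $\psi$ is bounded on only one side: there $\CE$ explodes for large positive but not large negative $\beta$ (or vice versa), and one must check that these are correctly excluded by the two-sided quantifier in (iv), so that (iv) matches two-sided boundedness of $\psi$ exactly.
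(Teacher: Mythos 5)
Your proposal is correct and takes essentially the same route as the paper: the equivalences (i)--(iii) are delegated to the classical theory of monotone location $M$-estimators in Huber and Ronchetti (the paper cites their Theorem 3.6), and (iii)$\Leftrightarrow$(iv) is established by comparing $\beta$ against the asymptotic slopes of the concave objective $\mu \mapsto \beta\mu - \frac{1}{k}\sum_n\big(\Psi(X_n-\mu)-\Psi(X_n-\mu_{\mathrm{MLE}})\big)$, which is exactly the paper's linear-growth argument. Your version is merely more explicit --- the criterion ``explosion iff $\beta+\frac{N}{k}\psi(-\infty)>0$ or $\beta+\frac{N}{k}\psi(+\infty)<0$'' spells out the thresholds and cleanly disposes of the mixed case where $\psi$ is bounded on only one side, which the paper handles tersely with ``a similar argument.''
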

\begin{proof}
 The equivalence of (i)-(iii) is given by Chapter 3 of Huber and Ronchetti \cite{Huber2009}, in particular Theorem 3.6. We seek to show (iii) and (iv) are equivalent. First, if (iii) holds, then $\Psi$ is of linear growth. Let $\beta>N\sup_x|\psi(x)|$.  We can then calculate
\[\CE_{\mathcal{Q}|\mathbf{x}}(\beta X)= \sup_{\mu_Q}\Big\{\beta\mu_Q - \sum_n\Big(\Psi(X_n-\mu_Q)-\Psi(X_n-\mu_{\mathrm{MLE}})\Big)\Big\}.\]
As $\beta$ is larger than the maximal derivative of $\sum_n \Psi(X_n-\mu_Q)$, we can see that the term in brackets is unbounded, so (iv) holds. A similar result holds if $\beta<-N\sup_x|\psi(x)|$.

To show (iv) implies (iii), we first observe that (iv) implies that for all $\beta$ sufficiently large,
\[\sup_{\mu_Q}\Big\{\beta\mu_Q - \sum_n \Big(\Psi(X_n-\mu_Q)-\Psi(X_n-\mu_{\mathrm{MLE}})\Big)\Big\} = \infty.\]
In other words, $\Psi$ is bounded above by a linear function. As $\psi=\Psi'$ is monotone increasing, this implies that $\psi$ is bounded above. A similar argument shows that $\psi$ is bounded below.
\end{proof}

\begin{remark}
This behaviour may seem pathological, but it has a natural interpretation. Suppose one has an estimation technique which does not depend on some property of the data, for example does not depend on some extreme quantile. Then, given the results of fitting this method, one should not be able to make strong statements about the behaviour of future observations at this quantile. 

In some sense, this is what is being captured by our approach. In the Laplace distributed case, our MLE does not depend on the behaviour of the data in the tails of the distribution (particularly not in a `linear' way, unlike the sample mean), so it is unsurprising that we at times cannot say much about the mean of linear functions of future observations. 

At the same time, this interpretation of the non-existence of moments is imperfect, as we still do have finite moments for sufficiently small multiples of $X$.
\end{remark}

Conversely, we have the following.
\begin{theorem}
Suppose we are in the setting of Theorem \ref{thm:robustequiv} and write 
\[\psi(\pm\infty) = \lim_{x\to\pm\infty}\psi(x).\]
If $\psi(-\infty)<\pm\beta/N<\psi(\infty)$, then the nonlinear expectation $\CE_{\mathcal{Q}|\mathbf{x}_N}^{k,1}(\beta X)$ is finite, and has breakdown fraction at least
\[\delta=\min\Big\{\frac{\psi(\infty) - |\beta|/N}{\psi(\infty)-\psi(-\infty)}, \frac{-|\beta|/N-\psi(-\infty)}{\psi(\infty)-\psi(-\infty)}\Big\}.\]
That is, for $m/N<\delta$, at least $m$ observations can be made arbitrarily large or small while $\CE_{\mathcal{Q}|\mathbf{x}_N}^{k,1}(\beta X)$ remains bounded.
\end{theorem}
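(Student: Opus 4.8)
The plan is to reduce $\CE^{k,1}_{\mathcal{Q}|\mathbf{x}_N}(\beta X)$ to a one-dimensional concave optimization over the location parameter and then read off both finiteness and the breakdown fraction from the score equation that characterizes the optimizer. Writing $\hat\mu$ for the MLE and using $E_{Q^\mu}[\beta X]=\beta\mu$ together with $\alpha_{\mathcal{Q}|\mathbf{x}_N}(\mu)=\sum_{n}(\Psi(X_n-\hat\mu)-\Psi(X_n-\mu))$, I would first record
\[
\CE^{k,1}_{\mathcal{Q}|\mathbf{x}_N}(\beta X)=\sup_{\mu}\Big\{\beta\mu+\frac{1}{k}\sum_{n}\Psi(X_n-\mu)\Big\}-\frac{1}{k}\sum_{n}\Psi(X_n-\hat\mu),
\]
so that the objective $G(\mu)=\beta\mu+\frac{1}{k}\sum_n\Psi(X_n-\mu)$ is concave, since $\Psi$ has monotone derivative. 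Its supremum is attained at an interior $\mu^\ast$ solving the generalized first-order condition $\sum_n\psi(X_n-\mu^\ast)\bowtie k\beta$ (read in the $\bowtie$ sense of the Laplace example to accommodate discontinuities of $\psi$). As $\psi(X_n-\mu)\to\psi(\mp\infty)$ when $\mu\to\pm\infty$ for each fixed $X_n$, the asymptotic slopes of $G$ are $\beta-\frac{N}{k}\psi(\mp\infty)$; finiteness of the supremum is exactly the requirement that these slopes be of the appropriate sign, which recovers the stated range $\psi(-\infty)<\pm\beta/N<\psi(+\infty)$ (up to the normalisation of $k$ in the statement).

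For the breakdown bound I would fix $m$ of the observations and drive them to one extreme (the opposite extreme being symmetric, handled by taking $|\beta|$ and the worse direction). For finite but large contaminated values, each contaminated summand in the score equation tends to a tail value $\psi(\pm\infty)$, while the remaining $N-m$ clean terms sweep out the interval $((N-m)\psi(-\infty),(N-m)\psi(+\infty))$ as $\mu$ varies. Hence the score equation retains a bounded solution $\mu^\ast$ precisely while
\[
k\beta - m\,\psi(\pm\infty)\in\Big((N-m)\psi(-\infty),\,(N-m)\psi(+\infty)\Big),
\]
whose binding endpoint rearranges to one of the two fractions defining $\delta$. Running the identical computation for contamination to the opposite extreme gives the other fraction; taking the minimum (with $|\beta|$ covering both signs of $\beta$) shows that for $m/N<\delta$ the maximizer $\mu^\ast$ remains in a compact set as the contaminated points escape, and hence that $\CE^{k,1}_{\mathcal{Q}|\mathbf{x}_N}(\beta X)=\beta\mu^\ast-\frac{1}{k}\alpha_{\mathcal{Q}|\mathbf{x}_N}(\mu^\ast)$ stays bounded.

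The delicate point, and where I expect the real work, is that controlling $\mu^\ast$ alone does not immediately bound the value, because both $\hat\mu$ and the penalty $\alpha_{\mathcal{Q}|\mathbf{x}_N}(\mu^\ast)=\sum_n(\Psi(X_n-\hat\mu)-\Psi(X_n-\mu^\ast))$ involve the contaminated observations, and each contaminated term $\Psi(X_n-\hat\mu)$ and $\Psi(X_n-\mu^\ast)$ individually diverges as the contamination escapes. The key is that these divergences cancel: since $\Psi$ is asymptotically affine with slope $\psi(\pm\infty)$ in each tail, the difference $\Psi(X_n-\hat\mu)-\Psi(X_n-\mu^\ast)$ converges to $\psi(\pm\infty)(\mu^\ast-\hat\mu)$, which is finite once both $\hat\mu$ and $\mu^\ast$ are controlled. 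I would therefore run the same solvability argument for the MLE score equation $\sum_n\psi(X_n-\hat\mu)=0$, whose own breakdown fraction is at least $\delta$ in the stated range, and combine the two controls. Taking the minimum over contamination directions is also what forces $\delta$ to be asserted only as a lower bound, rather than the exact breakdown fraction. Throughout, the possibly discontinuous or unbounded $\psi$ requires that the first-order conditions be handled in the generalized ($\bowtie$) sense, using monotonicity of $\psi$ in place of differentiability.
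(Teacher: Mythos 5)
Your proposal is correct, and its core --- reducing to a one-dimensional monotone score equation, replacing the $m$ contaminated score terms by their extreme values $\psi(\pm\infty)$, and asking when the clean terms can still balance the equation --- is precisely the paper's argument, and it produces the same threshold $\delta$. The genuine difference lies in the step you flag as ``where I expect the real work.'' You bound the value by additionally controlling the MLE $\hat\mu$ through its own score equation and showing that the individually divergent penalty terms $\Psi(X_n-\hat\mu)-\Psi(X_n-\mu^\ast)$ cancel, using asymptotic affineness of $\Psi$. The paper makes this step trivial: since the penalty is nonnegative and vanishes at the MLE, one has the sandwich $\beta\mu^{-}(\mathbf{x}(M,y))\le \pm\CE_{\mathcal{Q}|\mathbf{x}_N}^{k,1}(\pm\beta X)\le \beta\mu^{+}(\mathbf{x}(M,y))$, where $\mu^{\pm}$ are the optimizers for $\pm\beta$; hence only the two optimizers need to be uniformly controlled, and neither the MLE nor the penalty ever enters. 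Your route does go through, with two repairs worth recording. First, breakdown requires bounds that are uniform over all contaminating values, so your limiting statements (``tends to a tail value,'' ``converges to $\psi(\pm\infty)(\mu^\ast-\hat\mu)$'') should be two-sided inequalities, as in the paper's bounds on $\lambda^{\pm}$; this is easy, because for $\delta$ to be well defined and positive both limits $\psi(\pm\infty)$ must be finite, so $\psi$ is bounded, $\Psi$ is globally Lipschitz, and $|\Psi(X_n-\hat\mu)-\Psi(X_n-\mu^\ast)|\le\|\psi\|_\infty\,|\mu^\ast-\hat\mu|$ for every observation, contaminated or not. Second, the lower bound on the value is cheaper than your cancellation: taking $\mu=\hat\mu$ in the supremum gives $\CE_{\mathcal{Q}|\mathbf{x}_N}^{k,1}(\beta X)\ge\beta\hat\mu$ outright, so controlling $\hat\mu$ already suffices on that side. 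In short: same skeleton, same $\delta$; the paper's nonnegativity sandwich buys a shorter proof, while your version is sound once the limiting statements are upgraded to uniform estimates.
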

\begin{proof}
Without loss of generality, suppose $\beta>0$ (we shall prove the result for both $\beta$ and $-\beta$ simultaneously). 
Consider the functions 
\[\lambda^{\pm}(\mu,\mathbf{x}):=\pm\frac{\beta}{N}-\frac{1}{N}\sum_{n=1}^N \psi(X_n-\mu).\]
From a first order condition, the value of $\pm\CE_{\mathcal{Q}|\mathbf{x}}^{k,1}(\pm\beta X)$ is given by 
\[\pm\CE_{\mathcal{Q}|\mathbf{x}}^{k,1}(\pm\beta X) = \pm\beta\mu^\pm(\mathbf{x}) - \sum_{n=1}^N \Big(\Psi(X_n - \mu^\pm(\mathbf{x}))-\Psi(X_n - \mu_\mathrm{MLE}(\mathbf{x}))\Big)\]
where $\mu^\pm(\mathbf{x})$ is the solution to $\lambda^\pm(\mu, \mathbf{x})\bowtie 0$ (where again $\bowtie$ indicates either equality or a change of sign) and $\mu_{\mathrm{MLE}}(\mathbf{x})$ is the MLE based on $\mathbf{x}$. 

Now observe that $\lambda^\pm$ is monotone increasing with respect to $\mu$ and, as $\psi(-\infty)<\beta/N<\psi(\infty)$, we know $\lim_{\mu\to\infty}\lambda^\pm(\mu, \mathbf{x})>0$ and $\lim_{\mu\to\infty}\lambda^\pm(\mu, \mathbf{x})<0$. Therefore, there is exactly one (finite) solution to $\lambda^\pm(\mu, \mathbf{x})\bowtie 0$. It follows that $\CE_{\mathcal{Q}|\mathbf{x}}^{k,1}(\pm\beta X)$ exists and is real.

We now need to determine the breakdown fraction. For $M$ a set of indices, let $\mathbf{x}(M,y)$ denote the set of observations, with $X_i$ replaced by $y_i$ for $i\in M$. Suppose $|M|=m$ and $m/N<\delta$. We wish to show that there is a bound on $\CE_{\mathcal{Q}|\mathbf{x}_N}^{k,1}(\pm\beta X)$ which is uniform in $y$. From the definition and nonnegativity of the penalty function, it is easy to see that 
\[\beta\mu^-(\mathbf{x}(M,y))\leq \pm\CE_{\mathcal{Q}|\mathbf{x}_N}^{k,1}(\pm\beta X)\leq \beta\mu^+(\mathbf{x}(M,y)),\]
it follows that it is enough for us to prove that $\mu^\pm(\mathbf{x}(M,y))$ is uniformly bounded in $y$.

As $\psi$ is monotone, we observe that 
\[\begin{split}
&\pm\frac{\beta}{N}-\frac{1}{N}\sum_{n\not\in M} \psi(X_n-\mu) -\frac{m}{N} \psi(\infty)\\
&\leq \lambda^\pm(\mu, \mathbf{x}(M,y))\\
&\leq \pm\frac{\beta}{N}-\frac{1}{N}\sum_{n\not\in M} \psi(X_n-\mu) -\frac{m}{N} \psi(-\infty).
\end{split}\]
By monotonicity, it is enough to show that the terms on the right and left have roots for finite values of $\mu$ (as these will not depend on $y$). Considering the lower bound first, we see that as $\mu\to\infty$, we obtain
\[\pm\frac{\beta}{N}-\frac{1}{N}\sum_{n\not\in M} \psi(X_n-\mu) -\frac{m}{N}\psi(\infty) \to \pm\frac{\beta}{N}-\Big(1-\frac{m}{N}\Big)\psi(-\infty) -\frac{m}{N} \psi(\infty)>0,\]
and as $\mu\to-\infty$,
\[\pm\frac{\beta}{N}-\frac{1}{N}\sum_{n\not\in M} \psi(X_n-\mu) -\frac{m}{N}\psi(\infty) \to \pm\frac{\beta}{N}-\psi(\infty)<0.\]
Therefore, there is a finite root for the lower bound on $\lambda^\pm(\mu, \mathbf{x}(M,y))$. A similar argument applies to the upper bound. By monotonicity, we conclude that $\mu^\pm(\mathbf{x}(M,y))$ and hence $\CE_{\mathcal{Q}|\mathbf{x}_N}^{k,1}(\pm\beta X)$ are uniformly bounded in $y$, as desired.
\end{proof}

\begin{remark}
Given the close relationship between entropy and extreme value theory, these results suggest a further relationship between the extremes of a class of models, the statistical robustness of estimators, and the existence of divergence-robust estimates. The development of this theory may be of future interest.
\end{remark}

To conclude, we observe that the non-existence of finite values for $\CE^{k,1}_{Q|\mathbf{x}}(X)$ can also manifest itself in surprising ways, as we can see from the following extension of Example \ref{exampleNormal}.

\begin{example}\label{meanvariancexample}
Consider the case where $X,\{X_i\}_{i=1}^N$ are iid $N(\mu, \sigma^2)$, where both $\mu$ and $\sigma^2$ are unknown. The divergence penalty is then (writing $\hat\sigma^2 = \frac{1}{N}\sum_{n=1}^N(X_n-\bar X)^2$)
\[\begin{split}
\alpha_{\mathcal{Q}|\mathbf{x}_N}(Q^{\mu,\sigma^2})&= \frac{N}{2}\log(\sigma^2) +\sum_{n=1}^N \frac{(X_n-\mu)^2}{2\sigma^2}  -\frac{N}{2}\log(\hat\sigma^2) -\sum_{n=1}^N \frac{(X_N-\bar x)^2}{2\hat\sigma^2} \\
&= \frac{N}{2}\Big(\log(\sigma^2/\hat\sigma^2)+\frac{\frac{1}{N}\sum_{n=1}^N (X_n-\mu)^2}{\sigma^2}-1\Big).
\end{split}\]
If we attempt to calculate $\CE_{\mathcal{Q}|\mathbf{x}}^{k,1}(\beta X)$, we obtain
\[\begin{split}
\CE_{\mathcal{Q}|\mathbf{x}_N}^{k,1}(\beta X)&=\sup_{\mu,\sigma^2}\Big\{\beta\mu - \frac{N}{2k}\Big(\log(\sigma^2/\hat\sigma^2)+\frac{\frac{1}{N}\sum_{n=1}^N (X_n-\mu)^2}{\sigma^2}-1\Big)\Big\}\\
&= \sup_{\sigma^2} \Big\{\beta \bar X + \frac{\beta^2 k}{2N}\sigma^2 - \frac{N}{2k}\Big(\log(\sigma^2/\hat\sigma^2)-1+\frac{\hat\sigma^2}{\sigma^2}\Big)\Big\}.
\end{split}\]
This causes a problem, as the term on the right is unbounded above with respect to $\sigma^2$. Looking more closely, this function typically has a local maximum for $\sigma^2\approx \hat\sigma^2$, but for very large values of $\sigma$ the $\frac{\beta^2 k}{2N}\sigma^2$ term will dominate. Therefore, there is no way that, even for large samples, a finite value of  $\CE_{\mathcal{Q}|\mathbf{x}_N}^{k,1}(\beta X)$ can be obtained.

One possible way to deal with this is to modify our approach slightly, either by including a prior distribution\footnote{In this case, to ensure a finite answer, the prior distribution would need to be asymptotically exponentially small as $\sigma^2\to\infty$, which is not the case for the conjugate inverse-Gamma distributions for $\sigma^2$. This renders explicit calculation difficult.}, or by adding an additional regularizing term to ensure the supremum chooses values close to the statistical parameters. For example, taking the penalty 
\[\tilde\alpha_{\mathcal{Q}|\mathbf{x}_N}(Q^{\mu,\sigma^2}) = \frac{N}{2}\Big(\log(\sigma^2/\hat\sigma^2)+\frac{\frac{1}{N}\sum_{n=1}^N (X_n-\mu)^2}{\sigma^2}-1\Big)+\epsilon(\sigma^2-\hat\sigma^2)\]
for some $\epsilon>0$, results in a finite value for $\CE_{\mathcal{Q}|\mathbf{x}_N}^{k,1}(\beta X)$ whenever $\frac{\beta^2k}{2N} < \frac{\epsilon}{k}$, in particular one obtains consistent estimates as $N\to\infty$. 
\end{example}

There are innumerable other applications and examples of this approach, and extensions to other settings (for example where likelihoods are replaced by more general objects) may also be of interest. While our results have focussed on the (analytically simpler) setting of independent observations, the approach naturally extends to where models include correlated observations, as is common in time-series models.  

\section*{Appendix}

\begin{proof}[Proof of Lemma \ref{lem:GCDconditions}]
 We know that 
 \[\alpha_{\mathcal{Q}|\mathbf{x}_N}(Q) = -\sum_{n=1}^N \log\Big(\frac{f(X_n; Q)}{f(X_n; P)}\Big) + \sum_{n=1}^N \log\Big(\frac{f(X_n; \hat Q_N)}{f(X_n; P)}\Big).\]
Considering the first term, by translation and scaling, we can assume that $K=[0,1]$. For any $P\in\mathcal{Q}$, write $F_P(x) = \int_{0}^x f(y;P)dy$ for the distribution function associated with $P$. We know that $F_P^{-1}$ is $C^1$ with a norm on its derivative independent of $P$, by assumption (ii). Next observe that the natural logarithm is $C^\infty$ on $[C^{-1},C]$, so by standard results on composition of functions, the map 
\[u\mapsto\ell(u;Q,P) := \log\Big(\frac{f(F_P^{-1}(u), Q)}{f(F_P^{-1}(u), P)}\Big)\]
is $\rho$-H\"older continuous, with a norm independent of $P,Q$.   Working under $P$, we note that $U_n = F_P(X_n)$ are independent and uniformly distributed on $[0,1]$, and $E_P[\ell(U; Q,P)] = D_{\mathrm{KL}|X}(P||Q)$. 

By rescaling, we can assume, without loss of generality, that for every $P,Q\in \mathcal{Q}$, 
\[\ell(\cdot;Q,P)\in \mathbf{F}_\rho = \{g: |g(u)-g(v)|\leq |u-v|^\rho \text{ for all } u,v\in[0,1]\}.\]
It is enough, therefore, to prove a uniform convergence rate for functions in $\mathbf{F}_\rho$.

We can now appeal to Corollary 17.3.3 and the proof of Theorem 17.3.1 of Shorack and Wellner \cite[p.633]{Shorack1986} (itself based on Strassen and Dudley \cite{Strassen1969}) to see that, writing
\[\sqrt{N}\Big(\frac{1}{N}\sum_n g(U_n) - E[g(U)]\Big)=:Z_N(g),\]
we know that for any $\eta>0$, there is $M$ sufficiently large (independent of $N$) that
\[P\Big(\sup_{g\in \mathbf{F}_\rho} \|Z_N(g)\| > M\Big)\leq \eta.\]
(The usual purpose of this is as a step towards showing that $Z_N$ converges weakly to a Gaussian process, which is a form of Donsker's theorem.) By rearranging, it follows that 
\begin{equation}\label{eq:llnbound1}
\gamma(N):=\sup_{Q\in\mathcal{Q}}\Big\{\Big|-\frac{1}{N}\sum_n \log\Big(\frac{f(X_n; Q)}{f(X_n; P)}\Big)- D_{\mathrm{KL}|X}(P||Q)\Big|\Big\}=O_P(N^{-1/2})
\end{equation}

In particular, we know that $\hat Q_N$ takes values in $\mathcal{Q}$, so,
\begin{equation}\label{eq:llnbound2}
\Big|-\frac{1}{N}\sum_{n=1}^N \log\Big(\frac{f(X_n; \hat Q_N)}{f(X_n; P)}\Big)- D_{\mathrm{KL}|X}(P||\hat Q_N)\Big|=O_P(N^{-1/2}).
\end{equation}
From the definition of $\hat Q_N$ we see that 
\[\begin{split}0 &= \frac{1}{N}\sum_{n=1}^N \log\Big(\frac{f(X_n; P)}{f(X_n; P)}\Big)\\
&\leq \frac{1}{N}\sum_{n=1}^N \log\Big(\frac{f(X_n; \hat Q_N)}{f(X_n; P)}\Big) = \sup_{Q\in\mathcal{Q}}\frac{1}{N}\sum_{n=1}^N \log\Big(\frac{f(X_n; Q)}{f(X_n; P)}\Big)\\
&\leq -D_{\mathrm{KL}|X}(P||\hat Q_N) + \gamma(N).\end{split}\]
Therefore,
\begin{equation}\label{eq:llnbound3}
0\leq D_{\mathrm{KL}|X}(P||\hat Q_N) \leq \gamma(N)=O_P(N^{-1/2})\end{equation}
The result then follows from using \eqref{eq:llnbound1}, \eqref{eq:llnbound2} and \eqref{eq:llnbound3} with the triangle inequality.
\end{proof}

\begin{proof}[Proof of Lemma \ref{lem:unifBound}]
Our proof depends on three facts: that $\alpha$ is locally a quadratic to second order (via Taylor's theorem), that the MLE is consistent (allowing us to bound the third derivative with high probability), and that $\alpha$ is convex (which controls its global behaviour). We write $\hat \theta$ for $\hat \theta_N$ for notational simplicity.

As the MLE is consistent (and exists with high probability), as $N\to \infty$, for any radius $C>0$, we know 
\begin{equation}\label{eq:localprobbound}
 P(\|\hat\theta-\theta^P\|<C/2))\to 1.
\end{equation}
We also know that, for some constant $k$ (which will in general depend on $P$ and on $C$ being sufficiently small, but is independent of $N$), we have the bound $\|\partial^3 A(\theta)\|\leq k$ for all $\theta$ with $\|\theta-\theta^P\|<C$. Combining these, for all $\theta$ with $\|\theta-\hat\theta\|<C/2$, from Taylor's theorem
\[\alpha_{\mathcal{Q}|\mathbf{x}}(\theta) \geq N \Big(\frac{1}{2} (\theta-\hat\theta)^\top \mathfrak{I}_{\hat \theta} (\theta-\hat\theta)-k\|\theta-\hat\theta\|^3\Big).\]

As we know that $\mathfrak{I}_{\hat \theta}$ is not degenerate (uniformly in a neighbourhood of $\theta^P$), we can also assume that (making $k$ sufficiently large) 
\[(\theta-\hat\theta)^\top \mathfrak{I}_{\hat \theta} (\theta-\hat\theta) \geq \frac{1}{k}\|\theta-\hat\theta\|^2.\]
Therefore, taking $C\leq k^{-2}$, on the set $\|\theta-\hat\theta\|\leq C/2$ we have
\begin{equation}\label{eq:alphabound1}
\begin{split}
   \alpha_{\mathcal{Q}|\mathbf{x}}(\theta) &\geq N\Big(\frac{1}{k}\|\theta-\hat\theta\|^2 - k\|\theta-\hat\theta\|^3\Big)\\
&\geq N\Big(\frac{1}{k} - \frac{kC}{2}\Big)\|\theta-\hat\theta\|^2 = \frac{N}{k}\Big(1 - \frac{k^2C}{2}\Big)\|\theta-\hat\theta\|^2\\
&\geq \frac{N}{2k}\|\theta-\hat\theta\|^2
  \end{split}
\end{equation}
Note that $k$ and $C$ do \emph{not} depend on $N$, so \eqref{eq:localprobbound} remains valid.

We now need to extend the bound of \eqref{eq:alphabound1} to all $\theta$. We know that $\alpha_{\mathcal{Q}|\mathbf{x}}$ is convex and $\alpha_{\mathcal{Q}|\mathbf{x}}(\hat\theta)=0$. For any point $\theta$ such that $\|\theta-\hat\theta\|> C/2$, its projection on the ball of radius $C/2$ around $\hat \theta$ is given by
\[\theta_\pi = \hat\theta + \lambda (\theta - \hat \theta) := \hat\theta + \frac{C}{2\|\theta - \hat \theta\|} (\theta - \hat \theta).\]
Hence, from \eqref{eq:alphabound1}, we know that 
\begin{equation}\label{eq:alphabound2}
 \alpha_{\mathcal{Q}|\mathbf{x}}(\theta) \geq \frac{1}{\lambda}\alpha_{\mathcal{Q}|\mathbf{x}}(\theta_\pi) \geq \frac{1}{\lambda}\frac{N}{2k}\|\theta_\pi-\hat\theta\|^2 = \frac{N}{Ck}\|\theta-\hat\theta\|.
\end{equation}
Combining \eqref{eq:alphabound1} and \eqref{eq:alphabound2}, we know that 
\[\alpha_{\mathcal{Q}|\mathbf{x}}(\theta) \geq \Big(\frac{N}{2k}\|\theta-\hat\theta\|^2\Big)\wedge\Big(\frac{N}{Ck}\|\theta-\hat\theta\|\Big).\]
Now consider the set $\{\theta:\alpha_{\mathcal{Q}|\mathbf{x}}(\theta) \leq\rho\}$. We know that for all $\theta$ in this set,
\[\Big(\frac{N}{2k}\|\theta-\hat\theta\|^2\Big)\wedge\Big(\frac{N}{Ck}\|\theta-\hat\theta\|\Big)\leq \rho\]
which implies
\[\|\theta-\hat\theta\| \leq \frac{Ck\rho}{N} \vee \sqrt{\frac{2k\rho}{N}}=:R.\]
\end{proof}

\bibliographystyle{plain}  
\bibliography{llikpenBib}
\end{document}